\theoremstyle{plain}
\newtheorem{theorem}{Theorem}[section]
    \newtheorem{lemma}[theorem]{Lemma}
    \newtheorem{corollary}[theorem]{Corollary}
    \newtheorem{proposition}[theorem]{Proposition}
    \newtheorem{problem}[theorem]{Assertion}
    \newtheorem{example}[theorem]{Example}
    \newtheorem{pr}[theorem]{Problem}
    \newtheorem{oproblem}[theorem]{Open Problem}
\newcommand{\rk}{\operatorname{rk}}
\def\t{\widetilde}
\def\R{{\mathbb R}} \def\Z{{\mathbb Z}}    
\newcommand{\arxivonly}[1]{#1}
\begin{document}
 
\title{Low rank matrix completion and realization of graphs: \\ results and problems} 


\author{S. Dzhenzher, T. Garaev, O. Nikitenko,\\ A. Petukhov, A. Skopenkov, A. Voropaev\footnote{\emph{S. Dzhenzher,  A. Skopenkov:} Moscow Institute of Physics and Technology. 
\emph{T. Garaev:} Moscow State University. 
\emph{O. Nikitenko:} Altay Technical University (Barnaul).
 \emph{A. Petukhov:} Institute for Information Transmission Problems (Moscow). 
\emph{A. Skopenkov:} Independent University of Moscow, \url{https://users.mccme.ru/skopenko/}. 
\emph{A. Voropaev:} (Moscow). 
\newline
We are grateful to E. Kogan for allowing us to use and edit his texts, to Ya. Abramov, E. Alkin, and V. Retinskiy for useful discussions, 
to A. Ryabichev and MCCME publishing house for allowing us to use figures they prepared.}}
  
\date{}

\maketitle

\begin{abstract}
The Netflix problem (from machine learning) asks the following. 
Given a ratings matrix in which each entry $(i,j)$ represents the rating of movie $j$ by customer $i$, if customer $i$ has watched movie $j$, and is otherwise missing, we would like to predict the remaining entries in order to make good recommendations to customers on what to watch next. 
The remaining entries are predicted so as to minimize the {\it rank} of the completed matrix.

In this survey we study a more general problem, in which instead of knowing specific matrix elements, we know linear relations on such elements.  
We describe applications of these results to embeddings of graphs in surfaces (more precisely,  embeddings with rotation systems, and embeddings modulo 2). 
\end{abstract}

\tableofcontents

\section{Motivation and some main results}\label{s:intrnf}


\textbf{Remark} (motivation; formally not used later)
\emph{`Matrix completion is the task of filling in the missing entries of a partially observed matrix...
One example is the movie-ratings matrix, as appears in the Netflix problem (from machine learning): Given a ratings matrix in which each entry $(i,j)$ represents the rating of movie $j$ by customer $i$, if customer $i$ has watched movie $j$ and is otherwise missing, we would like to predict the remaining entries in order to make good recommendations to customers on what to watch next...'}. 
The remaining entries are predicted so as to minimize the {\it rank} of the completed matrix.
All the required definitions (of rank etc.) are given below.
For a brief overview of the history of this and related problems, see \cite{MC, NKS}, \cite[Remark 4]{Ko21}.
 
Here for simplicity we consider matrices with entries in the set $\Z_2=\{0,1\}$ of all residues modulo 2 (with the sum and product operations). 
This is sufficient for the topological applications, see below. 
We start with interesting elementary results in linear algebra. 
They allow us to construct algorithms estimating minimal rank for the particular case of unknown elements {\it on the diagonal} 
(Proposition \ref{t:rk1al} and Theorems \ref{main}, \ref{main-approx}, see also Proposition \ref{p:nondeg}).
Then we study a more general problem, in which instead of knowing specific matrix elements, we know linear relations on such elements.
We estimate the minimal rank of matrices with such relations (Theorems~\ref{p:rank3} and \ref{p:rank}).
 
These results have applications to embeddings of graphs in surfaces (and of $k$-dimensional `hypergraphs' in $2k$-dimensional surfaces). 
See \S\ref{s:weak}, \S\ref{s:emo2}, \cite{DS22, Sk24}.  

\medskip 
Denote by $\Z_2^{s\times n}=(\Z_2^s)^n$ the set of all $s\times n$ matrices with entries in $\Z_2$. 
Let $M_{i,j}$ be the entry in matrix $M$ in the row $i$ and column $j$.
Denote $[n]:=\{1,2,\ldots,n\}$. 


\begin{proposition}[\cite{Bi20}]\label{t:rk1al}  
(a) For a symmetric matrix with $\Z_2$-entries the following conditions are equivalent: 

$\bullet$ some entries on the main diagonal can be changed so that in the resulting matrix all non-zero rows are equal; 

$\bullet$ it is impossible to make the same permutation of rows and of columns\footnote{This means that the rows and columns are numbered by $1,\ldots,n$ (where $n=3,4$) and the permutation of the set $[n]$ is applied both to the rows and to the columns.} so that the upper left square will be one of the submatrices     
$$\begin{pmatrix}
    *& 1& 1\\
    1& *& 0\\
    1& 0& *
\end{pmatrix} \quad\text{or}\quad
\begin{pmatrix}
    *& 1& 0& 0\\
    1& *& 0& 0\\
    0& 0& *& 1\\
    0& 0& 1& *\\
\end{pmatrix},$$
where by * are denoted arbitrary (possibly different) elements.
 
(b) There is an algorithm with the complexity of $O(n^2)$ deciding for a symmetric matrix $M\in\Z_2^{n\times n}$ whether some entries on the main diagonal can be changed so that in the resulting matrix all non-zero rows are equal.
\end{proposition}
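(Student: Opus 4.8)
The plan is to prove (a) first, since the characterization underlies the algorithm in (b). For the forward direction of (a), suppose the diagonal can be changed so that all non-zero rows of the resulting symmetric matrix $M'$ are equal to a common vector $v$. If some $3\times 3$ principal submatrix on rows/columns $\{p,q,r\}$ had off-diagonal pattern as in the first forbidden matrix, then rows $p,q,r$ of $M'$ are all non-zero (each contains a $1$ off-diagonal), hence pairwise equal; but $M'_{q,r}=0$ while $M'_{p,q}=M'_{p,r}=1$, and equality of rows $q$ and $p$ forces $M'_{q,r}=M'_{p,r}=1$, a contradiction. For the second forbidden $4\times 4$ pattern, rows $p,q$ are non-zero and equal, forcing the $\{p,q\}\times\{r,s\}$ block to equal the $\{r,s\}\times\{r,s\}$-induced values; but that block is zero while rows $r,s$ are non-zero, again a contradiction after comparing with rows $r,s$. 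So the forbidden submatrices cannot occur.

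For the converse — the harder direction — I would argue contrapositively: assuming no diagonal change makes all non-zero rows equal, I exhibit one of the two forbidden submatrices. Consider the off-diagonal part of $M$ as (the adjacency matrix of) a graph $G$ on $[n]$, where $i\sim j$ iff $M_{i,j}=1$, $i\neq j$. The condition ``all non-zero rows equal after a diagonal change'' should be reinterpreted combinatorially: ignoring diagonal entries, two rows $i\neq j$ agree off the positions $\{i,j\}$ exactly when $i$ and $j$ have the same neighbors outside $\{i,j\}$ — i.e. $N(i)\setminus\{j\}=N(j)\setminus\{i\}$ — and one then checks the diagonal entries can be freely set to reconcile positions $i$ and $j$ themselves. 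So the desired conclusion holds iff all vertices of positive degree in $G$ are pairwise ``twins'' in this sense. The key combinatorial claim is then: if $G$ has two non-adjacent positive-degree vertices that are not twins, a path/obstruction argument produces the $3\times 3$ pattern, and if $G$ is a disjoint union of cliques but with at least two cliques each of size $\geq 2$, one extracts the $4\times 4$ pattern. The main obstacle is this case analysis: showing that being ``all non-isolated vertices are mutual twins'' is equivalent to ``$G$ restricted to non-isolated vertices is a single clique (so all rows equal the all-ones-type vector on that vertex set)'', and that any deviation is witnessed locally by three or four vertices. I would handle it by taking two non-isolated vertices $i,j$ that fail to be twins, picking a witness vertex $k\notin\{i,j\}$ lying in exactly one of $N(i),N(j)$, and examining the adjacency of $i,j$: if $i\not\sim j$ we get the $3\times3$ pattern on $\{k,i,j\}$ directly; if $i\sim j$, we use that $i$ or $j$ has a further neighbor to build either the $3\times 3$ or the $4\times 4$ pattern, exploiting symmetry of $M$ throughout.

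For part (b), the algorithm follows the structure just described: compute the off-diagonal rows of $M$ (time $O(n^2)$), identify the set $S$ of non-zero rows, pick any $i\in S$ as a reference, and verify in $O(n^2)$ total that every other $j\in S$ agrees with $i$ outside $\{i,j\}$ — equivalently, that after zeroing out the diagonal and the two relevant coordinates the rows match. Concretely, one can canonically replace each diagonal entry by, say, $0$, then check that all non-zero rows become equal once we also mask, for each pair, the two diagonal-position coordinates; a clean implementation compares each row $j$ to row $i$ and tolerates discrepancies only in coordinates $i$ and $j$. Each such comparison is $O(n)$ and there are at most $n$ of them, giving $O(n^2)$; reading the matrix is already $\Omega(n^2)$, so this is optimal up to constants. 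The only subtlety is the bookkeeping of the two ``exempt'' diagonal coordinates per comparison, which does not affect the asymptotic bound.
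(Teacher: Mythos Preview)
The paper does not actually give its own proof of this proposition; it is simply stated with the citation \cite{Bi20}. So there is nothing to compare against from the paper itself, and I will just evaluate your argument on its merits.

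Your graph-theoretic reformulation (off-diagonal of $M$ as adjacency matrix of $G$; the first condition holds iff the non-isolated vertices of $G$ form a single clique, equivalently iff $G$ contains neither an induced $P_3$ nor an induced $2K_2$) is the right way to see this, and the forward direction of (a) and the algorithm in (b) are essentially correct.

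There is, however, a genuine error in your case analysis for the converse in (a): you have the two cases swapped. With $k\in N(i)\setminus N(j)$, the triple $\{k,i,j\}$ gives the $3\times3$ cherry pattern precisely when $i\sim j$ (then $k\sim i$, $i\sim j$, $k\not\sim j$ is an induced $P_3$ with center $i$), \emph{not} when $i\not\sim j$. In the case $i\not\sim j$ the induced subgraph on $\{k,i,j\}$ has only the single edge $ki$, which matches neither forbidden pattern. To finish that case you must bring in a neighbor $m$ of $j$ (necessarily $m\notin\{i,k\}$) and do a further split: if $m\sim i$ you get a cherry on $\{m,i,j\}$; if $m\not\sim i$ but $m\sim k$ you get a cherry on $\{m,k,j\}$; if $m\not\sim i$ and $m\not\sim k$ you get the $2K_2$ on $\{i,k,j,m\}$. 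Equivalently and more cleanly: a graph is $P_3$-free iff it is a disjoint union of cliques, so either $G$ contains an induced $P_3$ (first forbidden submatrix), or $G$ is a disjoint union of cliques with at least two nontrivial components, yielding an induced $2K_2$ (second forbidden submatrix). With this correction your proof goes through.
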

 
The algorithmic results in this text could be omitted by theoretically-minded readers because they are easy corollaries of mathematical results.
The {\it complexity} of an algorithm is the number `elementary' steps in this algorithm.
An algorithm has complexity $O(f(n))$ if there is $C>0$ such that the complexity does not exceed $Cf(n)$ for any $n$. 

A square matrix $M\in\Z_2^{n\times n}$ is called \textbf{degenerate} if the sum of its several columns (a non-zero number of columns) is the zero column (i.~e., the column consisting of zeroes only).
A matrix is called \textbf{non-degenerate} otherwise.
\arxivonly{
E.g. of the following matrices $A_1$, $A_2$ and $A_3$ are degenerate, while $A_4$ is non-degenerate.
     \[
    A_1= \begin{pmatrix}0&0&0\\0&0&0\\0&0&0\end{pmatrix},\ 
    A_2=\begin{pmatrix}1&1&1\\1&1&1\\1&1&1\end{pmatrix},\ 
    A_3=\begin{pmatrix}1&1&1\\1&0&1\\1&1&1\end{pmatrix},\ 
    A_4=\begin{pmatrix}0&1&1\\0&0&1\\1&0&0\end{pmatrix}.
    \]
Introduction on degenerate matrices useful for the following result is presented in \S\ref{s:nondeg}. 


For $A_1,A_2,A_3$ we show how to change entries on the main diagonal to make the matrix non-degenerate; we show the opposite for $A_4$:   
\[
    A_1\to \begin{pmatrix}1&0&0\\0&1&0\\0&0&1\end{pmatrix},\quad
    A_2, A_3\to \begin{pmatrix}0&1&1\\1&0&1\\1&1&1\end{pmatrix},\quad
    A_4\to \begin{pmatrix}0&1&1\\0&1&1\\1&0&0\end{pmatrix}.
\]
}

\begin{proposition}\label{p:nondeg} (a) For any matrix $M \in \Z_2^{n\times n}$ some entries on the main diagonal can be changed so that the resulting matrix is degenerate.

(b) The same with `degenerate' replaced by `non-degenerate'.
\end{proposition}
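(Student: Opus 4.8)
For part (a), the plan is to change all diagonal entries to $0$ when $n$ is even, making every column have even weight, so the sum of all $n$ columns is zero; and when $n$ is odd, to make the matrix have a zero column. Actually, a cleaner uniform approach: if some off-diagonal column already lets us find a dependent set we are done, so the real content is to always be able to force degeneracy. The simplest route is: change the diagonal so that column $1$ becomes the zero column. This requires setting $M_{1,1}:=0$ and, for $i\ge 2$, we cannot touch $M_{i,1}$ since it is off-diagonal. So that does not work directly. Instead I would argue: set every diagonal entry to $0$. If $n$ is even, the sum of all columns of the resulting skew-ish matrix need not be zero, so this also fails in general. The correct idea: pick the diagonal entries greedily. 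Consider the columns $c_1,\dots,c_n$ as vectors in $\Z_2^n$ where $c_j$ has a free bit in position $j$ (the diagonal entry $M_{j,j}$). We want to choose these $n$ free bits and a nonempty $S\subseteq[n]$ with $\sum_{j\in S} c_j = 0$. Choose $S=[n]$; then the constraint $\sum_j c_j=0$ reads, in each coordinate $i$: $M_{i,i} + \sum_{j\ne i} M_{i,j} = 0$, which simply determines $M_{i,i}$. So setting $M_{i,i}:=\sum_{j\ne i}M_{i,j}$ makes the sum of all columns zero, hence the matrix degenerate. This handles (a) completely and is the main (easy) observation.

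For part (b), the plan is to change the diagonal so that the matrix becomes invertible over $\Z_2$, i.e., its columns form a basis. I would build the matrix one column at a time while controlling only the diagonal bit in each column. Process columns $j=1,2,\dots,n$; after step $j$ we want columns $c_1,\dots,c_j$ to be linearly independent. At step $j$, the column $c_j$ is fixed in all coordinates except coordinate $j$, so we may choose $c_j=v$ or $c_j=v+e_j$ for a known vector $v$ and the $j$-th standard basis vector $e_j$. Suppose for contradiction both choices fail, i.e., both $v$ and $v+e_j$ lie in $W:=\mathrm{span}(c_1,\dots,c_{j-1})$; then $e_j=v+(v+e_j)\in W$. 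So it suffices to ensure $e_j\notin \mathrm{span}(c_1,\dots,c_{j-1})$ for every $j$. This is the crux and it needs the columns to be chosen more carefully than just "independent"; the right invariant is stronger.

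The clean way to get the invariant is to process the columns in a good order or to prove the stronger statement directly. I would instead show by induction the statement: for every $m\le n$, the diagonal entries in the first $m$ rows/columns can be chosen so that the top-left $m\times m$ submatrix is non-degenerate (invertible over $\Z_2$), and moreover this is possible no matter what the remaining entries of those columns are — but that last clause is automatically about the $m\times m$ block only. For $m=1$: set $M_{1,1}:=1$. For the step $m\to m+1$: by induction the top-left $m\times m$ block $B$ is invertible. The $(m+1)$-st column restricted to the first $m$ coordinates is some fixed vector $u\in\Z_2^m$; write $u=B x$ for a unique $x$. The first $m$ entries of row $m+1$ form a fixed vector $w\in\Z_2^m$. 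Then the $(m+1)\times(m+1)$ block is invertible iff its Schur complement $M_{m+1,m+1} + w^\top B^{-1}u = M_{m+1,m+1}+w^\top x$ is nonzero, so set $M_{m+1,m+1}:=1+w^\top x$. This completes the induction at $m=n$, and an invertible square matrix over $\Z_2$ is by definition non-degenerate, proving (b). The main obstacle is precisely getting the induction hypothesis strong enough to push through the last column; using the Schur-complement formulation makes the diagonal choice explicit and the argument routine.
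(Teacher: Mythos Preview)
Your final arguments are correct and match the paper's proof: for (a) you set each diagonal entry so that every row sum vanishes, making the sum of all columns zero, exactly as the paper does; for (b) your induction on the top-left $m\times m$ block with the Schur complement $M_{m+1,m+1}+w^\top B^{-1}u$ is just the cofactor expansion $\det M = M_{n,n}+a$ (with $a$ determined and the minor of $M_{n,n}$ equal to $1$ by induction) that the paper uses. The exploratory dead ends (zeroing a single column, the greedy $e_j\notin W$ attempt) should be cut, but the substance is the same.
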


\begin{proof}
(a) Change the numbers on the main diagonal of $M$ so that the sum of the entries in each row is even. 
The resulting matrix is degenerate.

(b) Assume by induction on $n$ that the left upper $(n-1)\times(n-1)$-corner submatrix of $M$ is non-degenerate. 
Apply the decomposition formula for $\det M$ by the last row\arxivonly{ (Assertion \ref{pm:detdec}.c)}. 
We obtain $\det M=M_{n,n}+a$ for some $a\in\Z_2$. 
Thus taking $M_{n,n}=a+1$ we change $M$ so that the resulting matrix is non-degenerate (cf. Lemma \ref{non-degenerate-algo}.b). 
\end{proof}

The \textbf{rank} $\rk M$ of a matrix $M\in\Z_2^{s\times n}$ is the maximal number of columns of $M$ none of whose sums is zero.  
(This is the {\it dimension} of the {\it vector space} formed by the columns of the matrix.)
\arxivonly{
E.g. $\rk A_1=0$, $\rk A_1=1$, $\rk A_3=\rk A_4= 3$, 
and ranks of the following matrices 
    \[
        \begin{pmatrix}0&0&0&0\\0&0&0&0\\0&0&0&0\end{pmatrix},\quad
        \begin{pmatrix}1&1&1&1\\1&1&1&1\\1&1&1&1\end{pmatrix},\quad
        \begin{pmatrix}1&1&1&0\\1&0&1&0\\1&1&1&1\end{pmatrix},\quad
        \begin{pmatrix}0&1&1&1\\0&0&1&1\\1&0&0&1\end{pmatrix}.
    \]
are  0, 1, 3, 3, respectively. 
Introduction on rank useful for the following results is presented in \S\ref{s:lowr}.}

For a matrix $M\in\Z_2^{n\times n}$ let $R(M)$ be the minimal rank of all the matrices obtained by changing some entries on the main diagonal of $M$.
\arxivonly{E.g. for the matrices $A_1,A_2,A_3,A_4$ we have $R(M)$ is 0, 1, 1, 2, respectively.
The number $R(M)$ is not necessarily preserved by

$\bullet$ permutation of columns, because 
$R\!\begin{pmatrix}1&0&1\\0&0&1\\0&1&0\end{pmatrix}=2$ but  $R\!\begin{pmatrix}1&0&1\\0&1&0\\0&0&1\end{pmatrix}=1$.

$\bullet$ adding one column to another one, because
$R\!\begin{pmatrix}1&0&1\\0&1&1\\0&1&0\end{pmatrix}=2$ but $R\!\begin{pmatrix}1&0&1\\0&1&0\\0&0&1\end{pmatrix}=1$.}

\begin{theorem}[\cite{Ko21}]\label{main}
(a) To make a square matrix of rank $k$ out of a square matrix of rank $n$ by changing some diagonal entries, one needs to change at least $|n-k|$ entries.

(b) For any fixed $k$ there is an algorithm with the complexity of $O(n^{k+3})$ deciding for a matrix $M\in\Z_2^{n\times n}$ whether $R(M)\le k$.
\end{theorem}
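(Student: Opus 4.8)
The plan: part~(a) is immediate from subadditivity of rank, while part~(b) follows once the condition $R(M)\le k$ is reformulated as the existence of one $\le k$-dimensional subspace, after which the algorithm is a brute-force search over candidates for that subspace. For~(a): if the $n\times n$ matrix $M$ has rank $n$ and $M'$ is obtained from it by changing $t$ diagonal entries, then $M=M'+D$ where the diagonal matrix $D$ has at most $t$ non-zero columns, so $\rk D\le t$. Since the column space of a sum $A+B$ lies in the span of the column spaces of $A$ and $B$, rank is subadditive; hence $n=\rk M\le\rk M'+\rk D\le k+t$, so $t\ge n-k=|n-k|$ (note $k\le n$, as $M'$ is $n\times n$).

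For~(b) I would first establish the following characterization. Let $e_m$ denote the $m$-th standard basis column. Then $R(M)\le k$ if and only if there is a subspace $V\subseteq\Z_2^n$ with $\dim V\le k$ such that for every $m\in[n]$ at least one of $M_{*,m}$ and $M_{*,m}+e_m$ lies in $V$. The ``only if'' direction takes $V$ to be the column space of an optimal completion $M+D$: its $m$-th column equals $M_{*,m}+D_{m,m}e_m$, i.e.\ $M_{*,m}$ or $M_{*,m}+e_m$ according to the chosen diagonal bit. For ``if'', picking for each $m$ a bit $d_m$ with $M_{*,m}+d_m e_m\in V$ produces a completion all of whose columns lie in $V$, hence of rank $\le\dim V\le k$. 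The crucial sharpening: in the ``only if'' direction $V$ is spanned by the $n$ vectors $v_m:=M_{*,m}+d_m e_m$ while $\dim V\le k$, so some subfamily $\{v_j:j\in J\}$ with $|J|\le k$ already spans $V$; equivalently $V=\mathrm{span}\{M_{*,j}+\epsilon_j e_j:j\in J\}$ for some $J\subseteq[n]$ with $|J|\le k$ and some $\epsilon\in\{0,1\}^J$.

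The algorithm is now transparent: enumerate the $O(n^k)$ pairs $(J,\epsilon)$ with $|J|\le k$ and $\epsilon\in\{0,1\}^J$; for each, form $V:=\mathrm{span}\{M_{*,j}+\epsilon_j e_j:j\in J\}$ (reduced to echelon form over $\Z_2$) and test, for every $m\in[n]$, whether $M_{*,m}\in V$ or $M_{*,m}+e_m\in V$; declare $R(M)\le k$ iff some pair passes all $n$ tests. Correctness is exactly the characterization above, and a direct analysis of these $\Z_2$-linear-algebra steps (one rank computation plus $O(n)$ membership queries per candidate) gives the complexity bound $O(n^{k+3})$ of part~(b). The main — essentially the only non-routine — obstacle is discovering this reformulation together with the remark that the relevant subspace is already spanned by at most $k$ of the $n$ ``column-or-flipped-column'' vectors; once that is in hand, both correctness and the running-time estimate are routine bookkeeping. (One may instead search over the $\le k$ columns destined to form a basis of the completed matrix, but the subspace formulation seems cleanest.)
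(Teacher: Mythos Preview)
Your proof of~(a) is the same subadditivity argument as the paper's, with one caveat: you read ``a square matrix of rank~$n$'' as an $n\times n$ matrix of full rank, so you only prove $t\ge n-k$. The paper treats $n$ as merely the starting rank of a square matrix of unspecified size and also derives $\rk D\ge k-n$ by the symmetric application of subadditivity, whence the absolute value. This is a one-line addition, so it is a stylistic rather than substantive gap.

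For~(b) your route is genuinely different. The paper first replaces $M$ by a non-degenerate matrix $\overline M$ (via Lemma~\ref{non-degenerate-algo}), then invokes part~(a) to conclude that any diagonal $D$ with $\rk(\overline M+D)\le k$ must have at least $n-k$ ones, i.e.\ at most $k$ zeroes, on its diagonal; this bounds the search to $O(n^k)$ diagonal matrices, each tested for rank~$\le k$ in $O(n^2)$. You instead observe directly that $R(M)\le k$ is equivalent to the existence of a subspace $V$ of dimension~$\le k$ containing, for every column index~$m$, one of $M_{*,m}$ or $M_{*,m}+e_m$, and that such a $V$ is automatically spanned by at most $k$ of these modified columns; the brute-force search is then over the $O(n^k)$ pairs $(J,\epsilon)$. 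Both approaches are correct and reach the stated $O(n^{k+3})$ bound. Yours is arguably cleaner in that it avoids the preprocessing step to non-degeneracy and does not actually invoke part~(a); the paper's approach has the virtue of making the role of~(a) explicit and of reusing the already-established Lemma~\ref{non-degenerate-algo}.
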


\smallskip
\begin{proof}[Sketch of a proof (see the details in \S\ref{s:lowr})]
A matrix is said to be \textbf{diagonal} if all its entries outside of the main diagonal are zeroes.
Any matrix obtained by changing some diagonal elements of a matrix $M\in\Z_2^{n\times n}$ can be uniquely represented as the sum $M+D$, where $D$ is a diagonal matrix.
(So the inequality $R(M)\le k$ is equivalent to the existence of a diagonal matrix $D$ such that $\rk(M+D)\le k$.)

Part (a) is easily implied by the subadditivity of rank (Lemma \ref{rk-est}). 

(b) By (a), for $M$ non-degenerate the inequality $R(M)\le k$ is equivalent to the existence of a diagonal matrix $D$ with at most $k$ zeroes on the main diagonal such that $\rk(M+D)\le k$.
The algorithm of (b) constructs a non-degenerate matrix $\overline M$ from $M$ using   
Proposition \ref{p:nondeg}.b, and then adds to $\overline M$ every diagonal matrix with at most $k$ zeroes on the main diagonal.
\end{proof}

The \textbf{identity matrix} $E$ is the diagonal matrix whose diagonal elements are units.

\begin{theorem}[\cite{Ko21}, proved in \S\ref{s:lowr}]\label{main-approx} 
(a) For any non-degenerate matrix $M\in\Z_2^{n\times n}$ and diagonal matrix $D\in\Z_2^{n\times n}$ we have $2\rk(M+D)\ge\rk(M+E)$.

(b) There is an algorithm with the complexity of $O(n^4)$ calculating for a matrix $M\in\Z_2^{n\times n}$ an integer $k$ such that $k/2\le R(M)\le k$.
\end{theorem}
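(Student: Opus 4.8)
The plan is to prove (a) first and then derive (b) as an algorithmic corollary, exactly in the spirit of how Theorem~\ref{main} was organized. For (a), fix a non-degenerate $M$ and a diagonal matrix $D$; write $E$ for the identity. The key observation is that $M+E = (M+D) + (D+E)$, and $D+E$ is itself a diagonal matrix, namely the diagonal matrix whose diagonal is the complement (in $\Z_2$) of the diagonal of $D$. So by subadditivity of rank (Lemma~\ref{rk-est}) we get $\rk(M+E)\le \rk(M+D)+\rk(D+E)$. This alone is not enough, because $\rk(D+E)$ can be as large as $n$. The point where non-degeneracy enters: I would show that for a diagonal matrix $D'$ (here $D'=D+E$), one can bound $\rk(D')$ in terms of $\rk(M+D'')$ for a related $D''$. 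More precisely, I expect the real content to be a lemma of the form: if $M$ is non-degenerate and $D'$ is diagonal, then $\rk(D')\le 2\rk(M+D')$ is \emph{false} in general, so instead the argument must go the other way — bound $\rk(D+E)$ by something like $\rk(M+D)+\rk(M+E)$ by writing $D+E=(M+D)+(M+E)$ and using subadditivity again, giving $\rk(M+E)\le \rk(M+D)+\rk(M+D)+\rk(M+E)$, which is vacuous. So the naive manipulations fail, and the genuine idea must use non-degeneracy more essentially.

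The correct route, I believe, is: since $M$ is non-degenerate, $\rk M=n$, and by Theorem~\ref{main}(a) (the $|n-k|$ bound) changing the $t$ diagonal entries that are the support of $D$ drops the rank by at most $t$, so $\rk(M+D)\ge n-t$ where $t=\#\{i: D_{ii}=1\}$. Similarly $\rk(M+E)\ge n - (n) $ is vacuous, so instead I use: $M+E$ is obtained from $M+D$ by changing the $n-t$ diagonal positions where $D$ is $0$, hence again by Theorem~\ref{main}(a), $\rk(M+E)\le \rk(M+D) + (n-t)$. Combining $\rk(M+D)\ge n-t$, i.e. $n-t\le \rk(M+D)$, gives $\rk(M+E)\le \rk(M+D)+(n-t)\le \rk(M+D)+\rk(M+D)=2\rk(M+D)$. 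That is exactly the claimed inequality, and the only inputs are Theorem~\ref{main}(a) and the fact that $M+E$ and $M+D$ differ in exactly the $n-t$ diagonal slots where $D$ vanishes.

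For part (b): by part (a), $R(M)=\min_D \rk(M+D)$ satisfies $\tfrac12\rk(\overline M+E)\le R(M)\le \rk(\overline M + E)$ once we have replaced $M$ by a non-degenerate matrix $\overline M$ (using Proposition~\ref{p:nondeg}(b)) without changing $R$ — note $R(M)=R(\overline M)$ since $\overline M$ differs from $M$ only in diagonal entries. Wait: applying Proposition~\ref{p:nondeg}(b) changes diagonal entries, and changing diagonal entries does not change $R$, so indeed $R(\overline M)=R(M)$. Hence the algorithm is: run the $O(n^2)$ procedure of Proposition~\ref{p:nondeg}(b) to get $\overline M$ non-degenerate, compute $k:=\rk(\overline M+E)$ by Gaussian elimination over $\Z_2$ in $O(n^3)$ time, and output $k$. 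Then $k/2\le R(M)\le k$ by part (a). The total complexity is dominated by the rank computation, which is $O(n^3)$; the statement allows $O(n^4)$, so this is comfortably within budget (the slack presumably absorbs a less careful implementation or the cost of forming $\overline M$).

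The main obstacle I anticipate is getting the direction of the subadditivity/Theorem~\ref{main}(a) estimates right: there is a genuine temptation to write $M+E=(M+D)+(E+D)$ and bound $\rk(E+D)$ directly, which leads nowhere, whereas the working argument reinterprets $E+D$ as "the diagonal positions that still need changing" and feeds that count back through the rank-drop bound $\rk(M+D)\ge n-t$, which crucially requires $\rk M = n$, i.e. non-degeneracy of $M$. A secondary point to be careful about in (b) is verifying that $R$ is invariant under the diagonal modification performed by Proposition~\ref{p:nondeg}(b) — this is immediate from the definition of $R$ but should be stated.
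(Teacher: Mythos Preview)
Your argument is correct and matches the paper's proof almost exactly. For (a), the paper writes the same two estimates you found, just packaged directly via subadditivity (Lemma~\ref{rk-est}) rather than via Theorem~\ref{main}(a): it bounds $\rk(M+D)\ge \rk M-\rk D=n-t$ and $\rk(M+D)=\rk((M+E)+(E+D))\ge \rk(M+E)-(n-t)$, then adds. For (b), the paper does precisely what you propose; the only slip is that Proposition~\ref{p:nondeg}(b) carries no complexity bound --- the algorithmic version is Lemma~\ref{non-degenerate-algo}(b), which is $O(n^4)$, not $O(n^2)$, and this is what actually accounts for the $O(n^4)$ in the statement.
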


\section{Reminder: degenerate matrices}\label{s:nondeg}
 
\begin{problem}\label{p:det} (a) Degeneracy is not changed under permutation of columns (or rows).
    
(b) Degeneracy is not changed under adding one column (or row) to another.
    
(c) Any matrix can be changed to a diagonal matrix by transformations from (a,b).
    
(d)  A matrix is degenerate if and only if it cannot be changed by transformations from (a,b) to the identity matrix.
    
    
(e) A square matrix is degenerate if and only if the sum of its several rows (a non-zero number of rows) is the zero row.
    
(f) There is an algorithm with the complexity of $O(n^3)$ checking the degeneracy of an $n\times n$ matrix.
\end{problem}
 
For a matrix $M \in \Z_2^{n\times n}$ define the {\it determinant} of $M$ by $\det M:=0$ if $M$ is degenerate, and $\det M:=1$ otherwise. 
Another notation is 
$$\det\!\begin{pmatrix}M_{1,1}&M_{1,2}\\M_{2,1}&M_{2,2}\end{pmatrix}\! = \begin{vmatrix}M_{1,1}&M_{1,2}\\M_{2,1}&M_{2,2}\end{vmatrix}, \quad
\det M=\begin{vmatrix}
    M_{1,1} & \dots & M_{1,n} \\
    \vdots & \ddots & \vdots \\
    M_{n,1} & \dots & M_{n,n}
\end{vmatrix}.$$

\begin{problem}\label{pm:detdec}
(a) $\begin{vmatrix}a&b\\c&d\end{vmatrix} = ad+bc$.

\arxivonly{
(b) $\det(a_1+b_1,a_2,\ldots,a_n)=\det(a_1,a_2,\ldots,a_n)+\det(b_1,a_2,\ldots,a_n)$.
Here and below $a_j,b_1\in\Z_2^n$ are columns of length $n$.

(c) $\det(a_1,\ldots,a_n) = \sum\limits_{i=1}^n a_{i,n} \det(a_1^-, \ldots, a_{i-1}^-, a_{i+1}^-, \ldots, a_n^-)$,
where every column $a_i^- \in \Z_2^{n-1}$ is obtained from the column $a_i$ by deleting the last coordinate.

(d) $\det M = \sum\limits_{\sigma\in S_n} \prod\limits_{i=1}^n M_{i,\sigma(i)}$, where $S_n$ is the set of all permutations (i.~e., 1--1 correspondences) $\sigma:[n]\to[n]$.
}
\end{problem}
 
\begin{lemma}\label{non-degenerate-algo}
(a) Let $M\in \Z_2^{n\times n}$ be a matrix with zeroes on the main diagonal.
Define the sequence $M^{(i)}$, $i = 0, 1, 2, \ldots, n$ recursively as follows: 

$\bullet$ $M^{(0)}:=M$, and 

$\bullet$ $M^{(i)}$ is the result of replacing in $M^{(i-1)}$ the element $M^{(i-1)}_{i,i}=0$ by $1+\delta_i$, where $\delta_0=0$ and $\delta_i:=\det M^{(i-1)}_{[i]\times[i]}$ is the determinant of the left upper $i\times i$-corner submatrix of $M^{(i-1)}$.

Then the matrix $M^{(n)}$ is non-degenerate.

\arxivonly{
(b) There is an algorithm with the complexity of $O(n^4)$ which for a matrix $M\in\Z_2^{n\times n}$ finds some numbers from $\Z_2$ to replace the entries on the main diagonal of $M$ so that the resulting matrix is non-degenerate.
}
\end{lemma}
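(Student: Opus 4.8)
The plan is to prove part (a) by induction on $i$, with the invariant that the left upper $i\times i$ corner submatrix $M^{(i)}_{[i]\times[i]}$ is non-degenerate, i.e.\ $\det M^{(i)}_{[i]\times[i]}=1$; taking $i=n$ yields the lemma, since $M^{(n)}=M^{(n)}_{[n]\times[n]}$. The base case $i=1$ is immediate: as $M$ has zeroes on the main diagonal, $\delta_1=\det M^{(0)}_{[1]\times[1]}=M_{1,1}=0$, so $M^{(1)}_{1,1}=1+\delta_1=1$ and the $1\times1$ matrix $(1)$ is non-degenerate. (One may instead start at $i=0$ with the vacuously non-degenerate empty matrix.)

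For the inductive step, note that $M^{(i)}$ differs from $M^{(i-1)}$ only in the entry at position $(i,i)$, which lies outside the $(i-1)\times(i-1)$ corner; hence $M^{(i)}_{[i-1]\times[i-1]}=M^{(i-1)}_{[i-1]\times[i-1]}$, which is non-degenerate (determinant $1$) by the induction hypothesis. Now expand $\det M^{(i-1)}_{[i]\times[i]}$ and $\det M^{(i)}_{[i]\times[i]}$ along their last columns (Assertion \ref{pm:detdec}.c). The terms coming from rows $1,\dots,i-1$ coincide in the two expansions, since none of the corresponding minors involves the entry at $(i,i)$ — the only entry in which the two matrices differ; the only discrepancy is the row-$i$ term, whose minor equals $\det M^{(i-1)}_{[i-1]\times[i-1]}=1$ and whose coefficient is $M^{(i-1)}_{i,i}=0$ for $M^{(i-1)}$ and $M^{(i)}_{i,i}=1+\delta_i$ for $M^{(i)}$. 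Writing $s$ for the common sum of the row-$1,\dots,i-1$ terms, we get $\delta_i=\det M^{(i-1)}_{[i]\times[i]}=s$, and therefore $\det M^{(i)}_{[i]\times[i]}=s+(1+\delta_i)=\delta_i+1+\delta_i=1$ in $\Z_2$, which closes the induction.

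Part (b) is then routine: the algorithm computes $\delta_1,\dots,\delta_n$ and $M^{(1)},\dots,M^{(n)}$ exactly as prescribed in (a) and returns the main diagonal of $M^{(n)}$; each $\delta_i$ is the determinant of an $i\times i$ matrix, computable in $O(i^3)=O(n^3)$ steps by Assertion \ref{p:det}.f, so the total cost is $O(n^4)$. The only real subtlety in the whole argument is picking the leading-principal-submatrix invariant and then tracking the cancellation $\delta_i+\delta_i=0$ over $\Z_2$ in the last-column expansion; there is no deeper obstacle.
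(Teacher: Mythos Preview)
Your proof is correct and follows essentially the same route as the paper's: both prove by induction that $\det M^{(i)}_{[i]\times[i]}=1$, using the cofactor expansion to compare $\delta_i$ and $\Delta_i$ via the shared off-diagonal terms and the $(i-1)\times(i-1)$ principal minor, and both derive the $O(n^4)$ bound for (b) by summing the $O(i^3)$ determinant computations. The only cosmetic difference is that you expand along the last column while the paper (and the stated form of Assertion~\ref{pm:detdec}.c) expands along the last row; over $\Z_2$ this is immaterial, though strictly speaking you are invoking the transposed version of the cited assertion.
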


\emph{Hints and sketches of some proofs.}

\smallskip
\textbf{\ref{p:det}.} Hints: (a)-(b) track the maximal non-degenerate submatrix; (c) use induction. 

Part (a) is clear. 

(b) For a matrix $M$ denote by ${\rm row}_{i\to i+j}M$ the matrix obtained from $M$ by replacing  the $i$th row by the sum of the $i$th row and the $j$th row. 
The matrix ${\rm col}_{i\to i+j}M$ is defined similarly. 

It is clear that to prove part (b) we have to show that the matrices $M$, ${\rm row}_{i\to i+j}M$ and  ${\rm col}_{i\to i+j}M$ are degenerate or not simultaneously. 

Next, observe that ${\rm row}_{i\to i+j}{\rm row}_{i\to i+j}M=M={\rm col}_{i\to i+j}{\rm col}_{i\to i+j}M$. 
Thus it suffices to prove that if $M$ is degenerate then both ${\rm row}_{i\to i+j}M$ and ${\rm col}_{i\to i+j}M$ are degenerate. 

Assume that the sum of columns $c_1,c_2,\ldots,c_s$ of $M$ equals zero.

Then the sum of the `same' columns of ${\rm row}_{i\to i+j}M$ equals zero. 

If $i\notin\{c_1,\ldots, c_s\}$ then the sum of `the same' columns of ${\rm col}_{i\to i+j}M$ equals zero. If $i,j\in\{c_1,\ldots, c_s\}$ then the sum of columns indexed by $\{c_1,\ldots,c_s\}-\{j\}$ of ${\rm col}_{i\to i+j}M$ equals zero. 
If $i\in\{c_1,\ldots,c_s\}$ and $j\notin\{c_1,\ldots,c_s\}$ then the sum of columns indexed by $\{c_1,\ldots,c_s,j\}$ of ${\rm col}_{i\to i+j}M$ equals zero. 

This completes the proof of (b).

(c) We will show explicitly how to produce a diagonal matrix out of $M$. 

If all entries of $M$ are 0 then $M$ is already diagonal. 
If $M$ has a non-zero entry then we place this entry in the top-left corner by  permuting the row of this entry with the top row, and the column of this entry with the left column. 
For the obtained matrix add the top row to other rows and the left column to other columns. 
All entries in the left column and the top row except the top-left entry become zeroes. 
Delete the top row and the left column of the obtained matrix. 

Repeat the procedure inductively for the obtained submatrix.  
In the end this will produce a diagonal matrix. 

(d) By part (c) we can change $M$ into a diagonal matrix using transformations from parts (a, b); also $M$ is degenerate if and only if the new matrix is degenerate. 
It remains to mention that a diagonal matrix is non-degenerate iff it is the identity matrix.

(e) This follows from (a)-(d).

(f) The algorithm is constructed in the solution of part (c).
The algorithm has $n$ major steps, a single major step is described in the second paragraph of that solution. 
Every major step requires at most one permutation of rows, at most one permutation of columns and up to $2n$ additions of rows and columns. 
Thus the complexity of the whole algorithm is $O(n)+n\cdot O(n^2) = O(n^3)$.

\smallskip
\textbf{\ref{pm:detdec}.} (a) The formula follows because a matrix from $\Z_2^{2\times2}$ is degenerate if and only if either it has a zero row, or it has a zero column, or rows are the same and columns are the same (in the latter case all entries are ones).  

Alternatively, here are all matrices from $\Z_2^{2\times2}$ up to permutations of rows and columns:
\[
    \begin{pmatrix}1&0\\0&1\end{pmatrix},
    \begin{pmatrix}1&1\\0&1\end{pmatrix},
    \begin{pmatrix}0&0\\0&0\end{pmatrix},
    \begin{pmatrix}1&1\\1&1\end{pmatrix},
    \begin{pmatrix}1&1\\0&0\end{pmatrix},
    \begin{pmatrix}1&0\\1&0\end{pmatrix},
    \begin{pmatrix}1&0\\0&0\end{pmatrix}.
\]
The first two are non-degenerate, and the others are degenerate.  
It is easy to verify the formula for each of them. 


(d) This follows by (b,c). 
Here is an alternative direct proof. 
Consider $n\times n$ chessboard. 
A {\it rook placement} for such a chessboard is a placement of $n$ rooks on that board with the condition that they do not beat each other.
A {\it rook $M$-placement} for such a chessboard is a rook placement such that all rooks are staying on cells corresponding to unit entries of $M$. 
Denote by $\det^*M$ the parity of the amount of rook $M$-placements.
Then (d) can be restated as follows: $\det M=\det^*M$. 
This follows because 
 
$\bullet$ transformations of \ref{p:det}.a, \ref{p:det}.b preserve $\det^*M$, and 

$\bullet$ $\det M'=\det^* M'$ for a diagonal matrix $M'$. 

\smallskip
\textbf{\ref{non-degenerate-algo}.}
(a) In the following paragraph we prove by induction on $i\ge1$ that the determinant $\Delta_i:=\det M^{(i)}_{[i]\times[i]}$ of the left upper
$i\times i$-corner submatrix of $M^{(i)}$ is equal to $1$.
Then $\det M^{(n)}=\Delta_n=1$.

Base $i=1$ follows because  $\Delta_1 = 1+ \delta_0 = 1$.
Let us prove the inductive step $i-1 \rightarrow i$.
Apply the decomposition formula for the determinant $\Delta_i$ by the last row of the corresponding submatrix of $M^{(i)}$ (Assertion \ref{pm:detdec}.c).
Since $M^{(i-1)}_{i,i}=M_{i,i}=0$ and $\Delta_{i-1}=1$, we have $\Delta_i=\delta_i+(1+\delta_i)\Delta_{i-1}=1$.

(b) The algorithm is constructed by (a).
The algorithm is essentially a computation of the determinants of $n$ square submatrices of sizes $1,2,\ldots,n$.
Hence by Assertion \ref{p:det}.f its complexity is $O(1^3 + 2^3 + \ldots + n^3) = O(n\cdot n^3)= O(n^4)$.

\section{The rank of a matrix}\label{s:lowr}

    \begin{problem}\label{p:rk-base} Take  a matrix $M\in\Z_2^{s\times n}$. 
    
    (a) One can choose $\rk M$ columns of $M$ such that every column is the sum of some chosen columns.
    
(b) Assume that there are $k$ columns (not necessarily of $M$) such that every column of $M$ is the sum of some of them. 
    Then $\rk M\le k$. 

    (c) The rank of a submatrix does not exceed the rank of a matrix.
    \end{problem}

\begin{lemma}[subadditivity of rank]\label{rk-est} 
Let $P,Q$ be matrices of the same size with entries in $\Z_2$.
Then $\rk(P+Q)\le\rk P+\rk Q$. 
\end{lemma}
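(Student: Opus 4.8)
The plan is to reduce the statement to the spanning characterization of rank recorded in Problem~\ref{p:rk-base}. Set $p:=\rk P$ and $q:=\rk Q$. Applying part~(a) of Problem~\ref{p:rk-base} to $P$, I get columns $a_1,\dots,a_p$ of $P$ such that every column of $P$ is a sum of some of them; over $\Z_2$ such a ``sum of some of them'' means a sum over a subset of indices, the empty subset giving the zero column. Likewise, applying part~(a) to $Q$ yields columns $b_1,\dots,b_q$ of $Q$ such that every column of $Q$ is a sum of some of them.

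Next I look at the columns of $P+Q$ one at a time. Fix a column index $j$. The $j$th column of $P+Q$ is the sum of the $j$th column of $P$ and the $j$th column of $Q$, so it equals $\sum_{i\in S}a_i+\sum_{i\in T}b_i$ for suitable $S\subseteq[p]$ and $T\subseteq[q]$; that is, it is a sum of some of the $p+q$ vectors in the concatenated list $a_1,\dots,a_p,b_1,\dots,b_q$ (the two groups of indices are kept separate, so there is no spurious identification or cancellation between an $a_i$ and a $b_{i'}$). Hence every column of $P+Q$ is a sum of some of these $p+q$ fixed columns, and part~(b) of Problem~\ref{p:rk-base} gives $\rk(P+Q)\le p+q=\rk P+\rk Q$.

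I do not expect a genuine obstacle: the argument is essentially the observation that the column space of $P+Q$ is contained in the span of the union of the column spaces of $P$ and $Q$, whose dimension is at most $\rk P+\rk Q$. The only points needing a moment's care are bookkeeping over $\Z_2$ — reading ``a sum of some columns'' as a subset-sum so that two subset-sums merge into one subset-sum over the concatenated list, and allowing the empty subset-sum (the zero column) — both of which are already built into the formulation of Problem~\ref{p:rk-base}.
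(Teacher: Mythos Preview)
Your proof is correct and follows essentially the same approach as the paper: choose spanning columns for $P$ and for $Q$ via Problem~\ref{p:rk-base}(a), observe that each column of $P+Q$ is a sum of some of these $\rk P+\rk Q$ columns, and conclude by Problem~\ref{p:rk-base}(b).
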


\begin{proof} Choose columns from Assertion \ref{p:rk-base}.a for $P$ and for $Q$. 
Then every column of $P+Q$ is the sum of some of the chosen $\rk P + \rk Q$ columns. 
By Assertion  \ref{p:rk-base}.b $\rk(P+Q) \le \rk P+\rk Q$.
\end{proof}

\begin{proof}
[Proof of Theorem \ref{main}.a]
Take a matrix $M$ of rank $n$. 
Take a diagonal matrix $Q$ such that $\rk(M+Q)=k$.  
Apply the subadditivity of rank (Lemma \ref{rk-est}) for $P=M+Q$ (then $P+Q=M$). 
We obtain $\rk Q\ge \rk M-\rk (M+Q) = n - k$. 
Analogously $\rk Q\ge k-n$. 
Thus $Q$ has at least $|n-k|$ units on the main diagonal.
\end{proof}

\begin{proof}[Proof of Theorem \ref{main-approx}]
 (a) Denote by $n$ the number of columns of $M$ and of $D$.
    By the subadditivity of rank (Lemma \ref{rk-est}) we have
    \begin{multline*}
        2\rk(M + D) = \rk(M + D) + \rk((M + E) + (E + D)) \\
        \geq (\rk M - \rk D) + (\rk (M + E) - \rk (E + D)) \\
        = n - \rk D + \rk (M + E) - (n - \rk D) = \rk (M + E).
    \end{multline*}

(b) Let $M_n$ be the matrix obtained by applying the algorithm of Lemma \ref{non-degenerate-algo}.b to the matrix $M$.
Let $k := \rk (M_n + E)$.
We have $R(M)=\rk(M+D)$ for some diagonal matrix $D$.
Hence by (a) $k/2 \leq R(M) \leq k$ as required.

The number $k$ can be computed in time $O(n^3)$.
Hence the total complexity of the algorithm is $O(n^4) + O(n^3) = O(n^4)$.
\end{proof}

\arxivonly{

The following is proved similarly to the proof of Assertion \ref{p:det}.


\begin{problem}\label{p:dred} 
(a) A permutation of columns (or of rows) does not change the rank of a matrix.
    
(b) Adding one column to another one (or one row to another one) does not change the rank of a matrix.

(c) The rank of a matrix is equal to the maximal number of its rows none of whose sums is zero.

(d) The rank of a matrix is equal to the maximal size of its non-degenerate square submatrix.
\end{problem}
 
A square matrix with $\Z_2$-entries is called \textbf{even} if all the entries on the main diagonal are zeroes.
    
    \begin{problem}\label{t:rk1}
    (a) For $M\in\Z_2^{s\times n}$ all non-zero rows are equal if and only if $\rk M\le1$.
    
    (b) For a symmetric matrix $M\in\Z_2^{n\times n}$ all non-zero rows are equal if and only if by same permutation of rows and of columns it is possible to obtain a matrix whose upper left square is filled by ones, and all other elements are zeroes.
    
    (c) The rank of any non-zero symmetric even matrix is greater than one.
    \end{problem}
}

    \begin{problem}\label{p:rankintr} (a) There is an algorithm with the complexity of $O(n^3)$ which calculates the rank of a matrix from $\Z_2^{s\times n}$, $s\le n$.
    
    
    (b)* For a fixed integer $k$ there is an algorithm with the complexity of $O(n^2)$ deciding for $M\in \Z_2^{s\times n}$, $s\le n$, whether $\rk M\le k$.
    \end{problem}
 
    \begin{problem}\label{p: kog}  
    There is an algorithm with the complexity of $O(n^{k+3})$ finding for  $M\in\Z_2^{n \times n}$ a diagonal matrix $D$ such that
    
    (a) $\rk(M+D)\le k$; \quad (b) $\rk(M+D)=k$
    
    under the assumption that such a matrix $D$ exists.
    \end{problem}

\begin{proof} [Proof of Theorem \ref{main}.b]
The algorithm is constructed using Theorem \ref{main}.a and Lemma \ref{non-degenerate-algo}.b.
The algorithm given by Lemma \ref{non-degenerate-algo}.b has complexity $O(n^4)$.
There is an algorithm searching through all diagonal $n \times n$ matrices with $\leq k$ zeroes on the main diagonal with the complexity of
$$O\left(n\binom n0 + n\binom n1 + \ldots + n\binom nk\right) \overset{(*)}= O\left((k + 1)n\binom nk\right) =
O\left(n\cdot n^k\right) = O\left(n^{k+1}\right).$$
Here (*) holds because we may assume that $n\ge2k$.
Thus, by Assertion \ref{p:rankintr}.b the complexity of the whole algorithm is $O(n^4)+O(n^{k+1}n^2)=O(n^{k+3})$ (since $k\ge1$).
\end{proof}

\arxivonly{
    \begin{oproblem}\label{p:sharp}
    Is it correct that for any $m,k\le n$ and a matrix $M\in\Z_2^{n\times n}$ of rank $m$, if a matrix of rank $k$ can be obtained by changing some entries on the diagonal of $M$, then this can be done by changing exactly $|m-k|$ entries?
    \end{oproblem}

\begin{pr}\label{p:count}\arxivonly{(a,b,c)} 
Find the number of matrices of rank $k$ in $\Z_2^{n\times n}$ for $k=0,1,2$.
\end{pr}
}

\emph{Hints and sketches of some proofs.}

\smallskip
\textbf{\ref{p:rk-base}.} Hint to (b): find the number of sums of $k$ columns. 

Part (a) follows from the definition of $\rk M$. 

(b) By definition of $\rk$ the number of different sums of columns of $M$ is $2^{\rk M}$. 
On the other hand the number of such sums does not exceed $2^k$. 
Therefore $2^k\ge 2^{\rk M}$, hence $k\ge\rk M$.

\smallskip
\textbf{\ref{t:rk1}.}
Part (a) is clear. 

(b) If for a non-zero symmetric matrix $M$ there exists such a permutation of rows and columns, then $\rk M=1$ by Assertion~\ref{p:rk-base}.b. 

We now take a symmetric matrix $M$ of rank 1.   
As the required permutation we can take any permutation mapping non-zero rows of $M$ to the first rows. 
Indeed, take any non-zero rows $i,j$. 
If $M_{i,j}=0$ then there exists a non-zero row $k$ such that $M_{i,k}=1$. Hence the $j$th and the $k$th rows are distinct non-zero rows of the matrix $M$ of rank one. A contradiction.  
Hence $M_{i,j}=1$. 

(c) Pick a nonzero row and apply the above argument.

\smallskip
\textbf{\ref{p:rankintr}.} Hint for (a): cf. Assertion~\ref{p:det}.

(a) The algorithm from the proof of \ref{p:det}.c provides a diagonal matrix of the same rank, and has the required complexity.  
The rank of a diagonal matrix is equal to the number of non-zero entries in it.

(b) We shall construct a set $S_k$ of columns such that

$\bullet$ these columns constitute a non-degenerate submatrix;

$\bullet$ the first $k$ columns of the matrix $M$ are sums of several columns from the set $S_k$.

If $|S_k|>r$ for some $k=1,\ldots, n$ then the answer is `NO'. 
If for all $k=1,\ldots, n$ we have $|S_k|\le r$ then the answer is `Yes'. 
The answer is correct because $|S_1|\le |S_2|\le \ldots\le |S_n|$, and because $|S_n|>r$ is equivalent to $\rk M>r$.

Set $S_1:=\emptyset$ if the first column of $M$ is 0 then, and $S_1:=\{1\}$ otherwise. 

Let us define $S_{k+1}$ from $S_k$.  
We form the set of all sums of columns of $M$ with indices from $S_k$ (it takes $O(n)$ operations because $|S_k|\le r$). 
Then we compare the $(k+1)$st column of $M$ with all sums from this set (this will take at most $2^r O(n)=O(n)$ operations). 
If the $(k+1)$st column of $M$ is equal to at least one of the sums then $S_{k+1}:=S_k$. 
Otherwise we set $S_{k+1}:=S_k\cup\{k+1\}$. 

It is easy to verify that the total complexity of the algorithm is $O(n^2)$.

\smallskip
\textbf{\ref{p:count}.}
{\it Answers:}  (a) 1; \quad (b) $(2^n-1)^2$; \quad (c) $(2^n-1)^2(2^n-2)^2/6$.

(a) There exists only one matrix of rank 0, the matrix all of whose entries are zeroes.

(b) For the matrix of rank 1 all columns containing a non-zero entry are the same.
Hence such matrices are in 1--1 correspondence with ordered pairs formed by

$\bullet$ a non-empty subset of the set of columns (`non-zero columns'), and

$\bullet$ a nonzero vector in $v\in\Z_2^n$ (`column vector').

Therefore there are $(2^n-1)^2$ such matrices.

(c) Fix a matrix $M$ of rank $2$.
Then there exists a pair $(v,w)$ of columns of $M$ forming a non-degenerate matrix.
Any other column is either $0$ or $v$, or $w$, or $v+w$ (see Assertion \ref{p:rk-base}).
This set $S=S_M$ of four vectors does not depend on a choice of the two columns $v,w$; we call it the {\it column span} of $M$.
(This is a 2-dimensional vector subspace of $\Z_2^n$.)

Each column span is defined by any ordered pair of non-zero vectors in it.
Each column span contains exactly 6 such ordered pairs.
Hence there are $(2^n-1)(2^n-2)/6$ column spans.
Below we prove that 

(A) there are exactly $(2^n-1)(2^n-2)$ matrices of rank 2 for a given column span.

Hence there are $(2^n-1)^2(2^n-2)^2/6$ matrices of rank 2.

{\it First proof of (A).} To a matrix $M$ there correspond the set $X$ of columns of $M$ equal to $v$ or to $v+w$, and the set $Y$ of columns of $M$ equal $w$ or $v+w$. 
Since $\rk M=2$, both sets are non-empty and $X\ne Y$. 
Moreover, matrix $M$ can be reconstructed from $X,Y$. There are $(2^n-1)(2^n-2)$ pairs $(X,Y)$ of distinct non-empty subsets.

{\it Second proof of (A).}
For a 4-element set $S=\{0, v, w, v+w\}$, regard a matrix $M$ with the column span $S$ as a map $\phi_M$ from the set $[n]$ of columns to $S$. 
So $\rk M=2$ if and only if  

(*) the image of $\phi_M$ contains at least two of the vectors $v,w,v+w$.

There are $4^n$ maps $[n]\to S$. 
There are $2^n$ maps $[n]\to\{0,v\}$. 
The same holds for $\{0,v\}$ replaced either by $\{0, w\}$ or by $\{0, v+w\}$. 
There is only one map $[n]\to\{0\}$.
Hence there are exactly $4^n-3\cdot2^n+2=(2^n-1)(2^n-2)$ maps satisfying the condition (*).

\smallskip
{\it Remark.} \cite[Theorem 7.1.5 in p. 299]{HJ20} More generally, the number of matrices of rank~$k$ in~$\Z_2^{m\times n}$ equals 
$$\dfrac{2^{k(k-1)/2}\Pi_{i=0}^{k-1} (2^{m-i}-1)\Pi_{i=0}^{k-1} (2^{n-i}-1)}{\Pi_{i=0}^{k-1} (2^{k-i}-1)}.$$ 

\section{Weak realizability of graphs in surfaces}\label{s:weak}
 
A \textit{hieroglyph} on $n$ letters is a non-oriented cyclic letter sequence of length $2n$ such that each letter from the sequence appears in the sequence twice.

Take a hieroglyph on $n$ letters.
Take a convex polygon with $2n$ sides.
Put the letters in the hieroglyph on the sides of the convex polygon in the non-oriented cyclic order. 
For each letter, glue the ends of a ribbon to the pair of sides corresponding to the letter so that the glued ribbons are pairwise disjoint. 
The ribbons can be either twisted or not twisted.
Call the resulting surface a \textit{disk with ribbons} corresponding to the hieroglyph (Figure \ref{fig:disk-example}).

\begin{figure}[h]\centering
\includegraphics{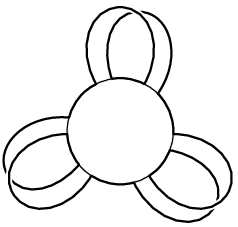}\qquad\qquad\includegraphics{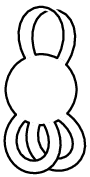}
\caption{Disk with ribbons corresponding to the hieroglyph $aabbcc$ (left) and $aabcbc$ (right)} 
\label{fig:disk-example}
\end{figure}

A hieroglyph $H$ is called \textit{weakly realizable} on the M\"obius band 
if some disk with ribbons corresponding to $H$ can be cut out of the  M\"obius band. 
Analogously, one defines weak realizability on 
{\it the disk with $k$ M\"obius bands} 
(Figure \ref{fig:disk-example}, left)\footnote{Since any connected punctured nonorientable 2-surface of nonorientable genus $m$ is homeomorphic to the disk with $k$ Mobius bands, in this definition the term `disk with $m$ Mobius bands' can be replaced by `a closed connected non-orientable 2-surface of nonorientable genus $k$'.}. 
This is the union of a disk and $k$ pairwise disjoint ribbons having their ends glued to $2k$ pairwise disjoint arcs on the boundary circle of the disk (the ribbons do not have to lie in the plane of the disk) so that

$\bullet$ the orientations of the ends of each ribbon given by an orientation of the boundary circle of the disk have `the same direction along the ribbon', and

$\bullet$ the ribbons are `separated', i.~e. there are $k$ pairwise disjoint arcs $A_i$ on the boundary circle of the disk such that the ends of the $i$-th ribbon are glued to two disjoint arcs contained in $A_i$, $i=1,2,\ldots,k$.


\begin{theorem}\label{t:weak} 
(a) \cite{Bi20} There is an algorithm with the complexity of $O(n^2)$ deciding whether a hieroglyph with $n$ ribbons is weakly realizable on the M\"obius band.

(b) \cite{Ko21} For any fixed $k$ there is an algorithm with the complexity of $O(n^{k+3})$ deciding whether a hieroglyph with $n$ ribbons is weakly realizable on the disk with $k$ M\"obius bands.
\end{theorem}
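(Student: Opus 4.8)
The plan is to reduce weak realizability of a hieroglyph on surfaces to a low-rank matrix completion problem over $\Z_2$, and then invoke the algorithmic results of Section~\ref{s:intrnf}. The key object is the \emph{interlacement matrix} (also called the linking matrix modulo $2$) $M = M(H) \in \Z_2^{n\times n}$ of the hieroglyph $H$: index rows and columns by the $n$ letters, set $M_{i,j} = 1$ for $i\ne j$ precisely when the two occurrences of letter $i$ and the two occurrences of letter $j$ alternate around the cyclic sequence (i.e.\ appear in the cyclic pattern $ijij$), and set $M_{i,j}=0$ otherwise. The diagonal entries record whether a ribbon is twisted, and these are exactly the entries we are free to choose when we build a disk with ribbons, since each ribbon can be glued with or without a twist. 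So the data of ``a disk with ribbons corresponding to $H$'' is precisely the data of a symmetric matrix obtained from $M(H)$ by changing some diagonal entries, i.e.\ a matrix $M(H)+D$ with $D$ diagonal.

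The next step is to identify, topologically, when such a disk with ribbons embeds in the disk with $k$ M\"obius bands. The classical fact here (going back to the mod-$2$ theory of embeddings of $4$-regular graphs / chord diagrams in surfaces) is that a disk with ribbons with symmetric interlacement-plus-twist matrix $N$ embeds into a disk with $k$ M\"obius bands if and only if $\rk N \le k$ over $\Z_2$ — the rank of the associated symmetric bilinear/quadratic form over $\Z_2$ is exactly the nonorientable genus needed. Combining this with the previous paragraph, $H$ is weakly realizable on the disk with $k$ M\"obius bands if and only if there is a diagonal $D$ with $\rk(M(H)+D)\le k$, i.e.\ if and only if $R(M(H)) \le k$ in the notation of Section~\ref{s:intrnf}. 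For $k=1$ one unwinds this further: $\rk N \le 1$ for a symmetric matrix means all nonzero rows are equal, which is exactly the situation handled by Proposition~\ref{t:rk1al} and its forbidden-submatrix criterion, giving the M\"obius-band case (a).

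With the reduction in place, part (a) follows from the $O(n^2)$ algorithm of Proposition~\ref{t:rk1al}(b) applied to $M(H)$ (after an $O(n^2)$ pass to build $M(H)$ from the cyclic sequence), and part (b) follows from Theorem~\ref{main}(b): for fixed $k$ there is an $O(n^{k+3})$ algorithm deciding $R(M(H))\le k$, and building $M(H)$ is negligible in comparison. So the algorithmic bookkeeping is routine once the dictionary is set up.

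I expect the main obstacle to be the topological reduction in the second paragraph — proving rigorously that the minimal nonorientable genus of a surface into which a given disk-with-ribbons embeds equals the $\Z_2$-rank of its interlacement-plus-twist matrix, and that cutting a disk-with-ribbons out of the disk with $k$ M\"obius bands is equivalent to this rank condition (in both directions, including realizing any target rank $\le k$ inside the disk with $k$ M\"obius bands). This is where one must be careful about the ``separated ribbons'' condition in the definition, about orientability bookkeeping, and about the difference between embedding a surface-with-boundary versus realizing an abstract chord diagram. The linear-algebra half (Proposition~\ref{t:rk1al}, Theorems~\ref{main} and~\ref{main-approx}) is already done and can be cited verbatim; everything hard is in translating ``can be cut out of'' into ``$R(M(H))\le k$''.
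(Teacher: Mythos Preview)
Your proposal is correct and follows essentially the same route as the paper: reduce weak realizability of $H$ on the disk with $k$ M\"obius bands to the condition $R(M(H))\le k$, then invoke Proposition~\ref{t:rk1al}(b) for part~(a) and Theorem~\ref{main}(b) for part~(b), with $O(n^2)$ preprocessing to build the overlap matrix. The topological reduction you flag as the main obstacle is exactly what the paper packages as Theorem~\ref{t:mohar} and attributes to \cite{Mo89}, so no additional argument is needed there.
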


This is proved using a linear algebraic weak realizability criterion 
(Theorem \ref{t:mohar} below), and a linear algebraic argument (Proposition \ref{t:rk1al} and Theorem \ref{main}). 

Two letters $a,b$ in a hieroglyph $H$ \textit{overlap in $H$} if they interlace in the cyclic sequence of the hieroglyph (i.~e., if they appear in the sequence in the order $abab$ but not $aabb$).
Define the \textit{overlap matrix} $M(H)\in\Z_2^{n\times n}$ of a hieroglyph $H$ as follows. 
Put zeroes on the main diagonal.
Put $1$ in the cell $(i, j)$ for $i \neq j$ if the letters $i, j$ overlap in $H$, and put $0$ otherwise.
 
\begin{theorem}\label{t:mohar}
Hieroglyph $H$ is weakly realizable on the disk with $k$ M\"obius bands if and only if $R(M(H))\le k$. 
\end{theorem}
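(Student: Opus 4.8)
The plan is to establish a dictionary between the combinatorial operation of "choosing whether to twist each ribbon" and the algebraic operation of "changing entries on the main diagonal of the overlap matrix", and then to invoke a known non-diagonal version of the realizability criterion (an overlap/adjacency matrix criterion in the spirit of Mohar--Thomassen or the GAGTA-style results on interlacement). First I would set up the following: given a hieroglyph $H$ on $n$ letters together with a choice $\varepsilon\in\Z_2^n$ of twisting for each ribbon, the resulting disk with ribbons is an abstract surface whose homeomorphism type (among compact surfaces with boundary with a single boundary component, or with the appropriate number of boundary components) is determined by the rank over $\Z_2$ of the $n\times n$ matrix $M_\varepsilon$ obtained from $M(H)$ by putting $\varepsilon_i$ in the cell $(i,i)$. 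This is the classical fact that the $\Z_2$-intersection form on $H_1$ of the surface (with the natural basis given by the ribbon core curves) is exactly $M_\varepsilon$, and that the non-orientable genus (number of Möbius bands needed) of a disk-with-ribbons surface equals the rank of its $\Z_2$-intersection form. I would cite or reprove this as the key lemma.

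Next I would translate "weakly realizable on the disk with $k$ Möbius bands" into the language of $M_\varepsilon$. By definition, $H$ is weakly realizable on the disk with $k$ Möbius bands precisely when some disk with ribbons corresponding to $H$ — i.e. some choice of twistings $\varepsilon$ — can be cut out of the disk with $k$ Möbius bands. Cutting a sub-disk-with-ribbons out of the disk with $k$ Möbius bands is possible if and only if the sub-surface embeds in it, and (again by the classification of non-orientable surfaces with boundary, together with the fact that a sub-surface embeds in a surface of non-orientable genus $k$ iff its own non-orientable genus is at most $k$) this happens exactly when the non-orientable genus of the disk-with-ribbons surface built from $(H,\varepsilon)$ is at most $k$. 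By the key lemma this genus equals $\rk M_\varepsilon$. Hence $H$ is weakly realizable on the disk with $k$ Möbius bands iff there exists $\varepsilon\in\Z_2^n$ with $\rk M_\varepsilon\le k$, which is exactly the statement $R(M(H))\le k$, since the matrices $M_\varepsilon$ are precisely the matrices obtained from $M(H)$ by changing some diagonal entries.

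The main obstacle, and the place where care is needed, is the geometric direction: showing that if the abstract disk-with-ribbons surface has non-orientable genus $\le k$ then it literally sits as a cut-out sub-surface of the standard disk with $k$ Möbius bands (and conversely). For the forward direction one uses that any compact surface of non-orientable genus exactly $m\le k$ with one boundary circle embeds in the closed non-orientable surface of genus $k$, hence in the disk with $k$ Möbius bands; the boundary curve then bounds the complementary region which one removes, exhibiting the cut-out. For the converse one observes that a sub-surface of a disk with $k$ Möbius bands cannot have non-orientable genus exceeding $k$, because the $\Z_2$-intersection form on $H_1$ of the sub-surface is the restriction of that of the ambient surface, whose rank is $k$, and restriction cannot increase rank — here one invokes Assertion \ref{p:rk-base}.c, that the rank of a submatrix does not exceed the rank of the matrix. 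I would present the $H_1$/intersection-form computation in detail (including the verification that the core curves of the ribbons give a basis of $H_1$ and that their pairwise $\Z_2$-intersection numbers are the off-diagonal entries of $M(H)$ while the self-intersection of the $i$-th core curve is $0$ or $1$ according to whether the $i$-th ribbon is untwisted or twisted), and treat the surface-classification inputs as standard, citing \cite{DS22, Sk24} for the topological background.
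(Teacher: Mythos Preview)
The paper does not supply a proof here; immediately after the statement it writes ``This follows from \cite[Theorem~3.1]{Mo89} (see also \cite[\S2.8, statement~2.8.8(c)]{Sk20}).'' So there is nothing to compare at the level of argument --- your proposal is an attempt to unpack what that citation contains.

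Your outline is right in structure, and one direction is clean: if some $(H,\varepsilon)$ embeds in the disk with $k$ M\"obius bands then the $\Z_2$-intersection form of $(H,\varepsilon)$ is the restriction of the ambient form of rank $k$, hence $\rk M_\varepsilon\le k$ and $R(M(H))\le k$. But your key lemma in the other direction --- that $(H,\varepsilon)$ embeds in the disk with $k$ M\"obius bands iff $\rk M_\varepsilon\le k$ --- is \emph{false} when $\varepsilon=0$ and the resulting surface is orientable of positive genus. Take $H=abab$, $\varepsilon=(0,0)$: the surface is a once-punctured torus with $\rk M_\varepsilon=2$, yet it does not embed in the once-punctured Klein bottle (both carry rank-$2$ intersection forms on $H_1(\,\cdot\,;\Z_2)$, but the torus form is even while the Klein-bottle form is odd, so an embedding would force a rank-$2$ even subspace inside a rank-$2$ odd space). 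Equivalently, the complement of such an embedding would have to be an annulus, making the punctured Klein bottle homeomorphic to the punctured torus. So ``$\rk M_\varepsilon\le k$'' does not by itself give embeddability for that particular $\varepsilon$.

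To rescue the backward implication you need an extra step: whenever $R(M(H))\le k$ with $k>0$, there is a \emph{non-orientable} choice $\varepsilon'\neq0$ with $\rk M_{\varepsilon'}\le k$; equivalently, for an even symmetric $\Z_2$-matrix with $R(M)>0$ the minimum rank is also attained by some odd diagonal completion. That is true and is exactly the nontrivial content packed into Mohar's theorem, but it is missing from your sketch, and your phrase ``non-orientable genus of the disk-with-ribbons surface'' papers over precisely this case.
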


This follows from \cite[Theorem 3.1]{Mo89} (see also \cite[\S 2.8, statement 2.8.8(c)]{Sk20}).

\arxivonly{See more in \cite{Bi20}, \cite[Appendix]{Ko21}, \cite[\S2]{Sk20}.}

\section{Modulo 2 embeddings of graphs to surfaces}\label{s:emo2}

This section is formally not used later, but serves as an additional motivation for \S\ref{s:comhyp}.  

Denote by $S$ the torus, or sphere with handles, or the M\"obius band, or the Klein bottle, or even any $2$-dimensional surface. 
Their simple definitions can be found e.g. in \cite[\S2.1]{Sk20}. 


Below graph drawings on $S$ may have self-intersections. 
An {\it embedding} (or realization) is a graph drawing without self-intersections. 

\begin{figure}[h]\centering
\includegraphics[scale=.9]{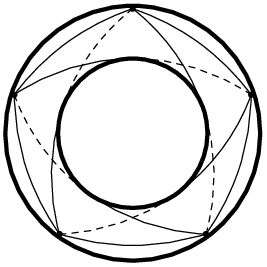}\qquad\qquad
\includegraphics[scale=1.2]{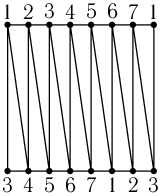}\qquad\qquad
\includegraphics{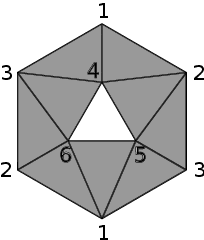}
\caption{Realization of nonplanar graphs}\label{real}
\end{figure}

\begin{example}\label{p:draw}    
(a) Beautiful realizations of the graphs~$K_5$ and $K_7$ on the torus are shown in Figure~\ref{real}, left and middle. 

(b) A beautiful realization of $K_6$ in the M\"obius band is shown in Figure.~\ref{real}, right.  

(c) There is an embedding of $K_m$ in the sphere with some number of handles (depending on $m$). 


Draw the graph~$K_m$ in the plane with 
only double self-intersection points. 
In a~small neighborhood of every double point, attach a handle and lift one of the edges `bridgelike' over the other edge to the handle (Figure~\ref{f:res}). 
\end{example}

\begin{figure}[h]\centering
\includegraphics{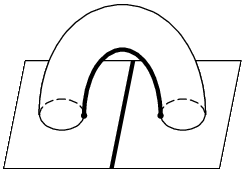} 
\caption{Resolving intersection by adding a handle}
\label{f:res}
\end{figure}

\begin{figure}[h]\centering
\includegraphics[scale=.9]{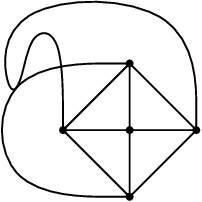}
\caption{
A `non-general position even drawing' of $K_5$ in the plane. 
The drawings (i.~e., the images of) every two non-adjacent edges intersect at an even number of points.}
\label{f:evenk5}
\end{figure}
 
A {\it self-intersection point} of a drawing is a point on the drawing to which corresponds more than one point of the graph itself. 

A graph drawing is said to be \textbf{general position} if 

$\bullet$ to every self-intersection point there correspond exactly two points of the graph; 

$\bullet$ every drawing of a vertex is not a self-intersection point, 

$\bullet$ the drawing has finitely many  self-intersection points, and 

\begin{figure}[h]\centering
\includegraphics{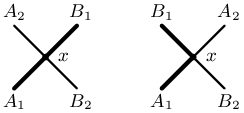}
\caption{A transverse intersection and a non-transverse intersection}
\label{trans}
\end{figure}

$\bullet$ at every such point the self-intersection is transverse (Figure \ref{trans})\footnote{Strictly speaking, the transversality is only easy to define for PL (piecewise-linear) not continuous graph drawings. 
PL curves on the torus can be easily defined by regarding the torus as obtained from a rectangle by gluing.
A \emph{PL curve on the torus} is then a family of polygonal lines in the rectangle satisfying certain conditions (work out these conditions!).
In a similar way, other surfaces $S$ can be obtained from plane polygons by gluing. 
This allows one to define PL curves on $S$. A graph drawing on $S$ is called {\it PL} if the drawing of every edge is PL.   
Another formalizations are given in \cite[\S4, \S5]{Sk20}.}.  

A general position graph drawing is a \textbf{$\Z_2$-embedding} if the drawings of every two non-adjacent edges intersect at an even number of points. 

If $S$ is either the plane or the torus or the M\"obius band, and a graph has a $\Z_2$-embedding to $S$, then the graph embeds to $S$ (Hanani-Tutte; Fulek-Pelsmajer-Schaefer, 2020; Pelsmajer-Schaefer-Stasi, 2009). 
However, there is a graph having a $\Z_2$-embedding to the sphere with 4 handles but not an embedding in the sphere with 4 handles (Fulek-Kyn\v cl, 2017). 
See references in \cite[Remark 1.3.b,c]{Bi21}.

\begin{theorem}[\cite{FK19}]\label{t:nonz2}
If a graph $K$ has a $\Z_2$-embedding to the sphere with $g$ handles, then 


(a) $g\ge(m-5)^2/16$ for $K=K_m$. \qquad
(b) $g\ge(n-2)^2/4$ for $K=K_{n,n}$.
\end{theorem}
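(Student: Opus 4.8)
\emph{Sketch of a proof proposal.} The plan is to extract from a $\Z_2$-embedding a matrix of small rank whose diagonal may be altered freely, and then apply Theorem~\ref{main-approx}. Fix a $\Z_2$-embedding of $K$ (with $K=K_m$ or $K=K_{n,n}$) in the sphere $S_g$ with $g$ handles, and a spanning tree $T$ of $K$; let $C_1,\dots,C_N$ be the fundamental cycles of the non-tree edges, so $N=|E(K)|-|V(K)|+1$. Each $C_s$ gives a $\Z_2$-homology class $c_s\in H_1(S_g;\Z_2)\cong\Z_2^{2g}$, and since the $\Z_2$-intersection form on $\Z_2^{2g}$ has rank $2g$, the Gram matrix $M\in\Z_2^{N\times N}$ with $M_{s,t}=\langle c_s,c_t\rangle$ satisfies $\rk M\le 2g$. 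The heart of the argument is to identify $M$ combinatorially: if $C_s$ and $C_t$ are vertex-disjoint then all their edge pairs are non-adjacent, hence cross an even number of times, so $M_{s,t}=0$; and after normalizing the drawing (making each edge a simple arc and adjacent edges cross evenly, in the Hanani--Tutte manner) the only remaining contributions to $\langle c_s,c_t\rangle$ come from the cyclic orders (``rotations'') in which edges leave the vertices shared by $C_s$ and $C_t$.

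I would then pick $T$ so that these rotation contributions are concentrated at a single vertex. For $K_m$ take $T$ to be the star at a vertex $v_0$; then the $C_s$ are the $\binom{m-1}{2}$ triangles $v_0v_iv_j$, any two of them sharing either only $v_0$ or a common edge at $v_0$. One checks that $M_{s,t}=0$ when they share an edge, while $M_{s,t}$ equals the interleaving of the chords $\{i,j\}$ and $\{k,l\}$ when they share only $v_0$; thus $M=A_\pi+D$, where $\pi$ is the rotation at $v_0$ (a cyclic order of $m-1$ points on a circle), $A_\pi\in\Z_2^{\binom{m-1}{2}\times\binom{m-1}{2}}$ is the mod-$2$ crossing matrix of all chords of that circle, and $D$ is diagonal (encoding the ``self-interaction'' of each triangle). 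Hence $2g\ge\rk(A_\pi+D)\ge R(A_\pi)$. Relabelling the $m-1$ points conjugates $A_\pi$ by a permutation acting simultaneously on its rows and columns, so $R(A_\pi)$ is independent of $\pi$; and by Theorem~\ref{main-approx}(a) one gets $R(A_\pi)\ge\tfrac12\rk(A_\pi+E)$ (after passing to a largest non-degenerate principal submatrix if $A_\pi$ itself is degenerate). It thus suffices to prove the concrete $\Z_2$-linear-algebra estimate $\rk(A_\pi+E)\ge(m-5)^2/4$ for the chord-crossing matrix of a convex polygon. For $K_{n,n}$ one argues in the same way with a ``double star'' spanning tree, for which the fundamental cycles are $4$-cycles that pairwise share (essentially) only the vertex $a_1$; this again gives $M=A_\pi+D$ with $A_\pi$ a chord-crossing matrix on $n$ points, and the analogous estimate $\rk(A_\pi+E)\ge(n-2)^2/2$ yields $g\ge(n-2)^2/4$.

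The step I expect to be the real obstacle is the combinatorial identification of $M$ in terms of the rotation system — in particular, verifying that after normalization all off-diagonal entries on pairs of cycles sharing edges vanish and the remaining ones are exactly interleavings, so that $M$ genuinely has the form $A_\pi+D$ with $A_\pi$ explicit. The other potentially delicate point is the final rank computation for $A_\pi+E$, which must be carried out carefully enough to produce the exact constants $1/16$ and $1/4$ (and this explains the slack ``$m-5$''). Everything else — the bound $\rk M\le 2g$, the passage to $R(A_\pi)$, and the inequality $R(A_\pi)\ge\tfrac12\rk(A_\pi+E)$ — is immediate from the intersection form together with the lemmas above (subadditivity of rank, Lemma~\ref{rk-est}, and Theorem~\ref{main-approx}).
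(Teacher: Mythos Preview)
Your plan contains a genuine gap at exactly the step you flag as the ``real obstacle'': the identity $M=A_\pi+D$ is not true. Off-diagonal entries corresponding to triangles that share an edge need \emph{not} vanish. Take $K_4$ with the star at $v_0$: the three fundamental triangles $T_{12},T_{13},T_{23}$ pairwise share an edge through $v_0$, so in your chord-crossing matrix $A_\pi$ every off-diagonal entry is $0$. Yet for a suitable embedding of $K_4$ in the torus the homology classes of $T_{12}$ and $T_{13}$ can be a meridian and a longitude, giving $M_{T_{12},T_{13}}=1$. Thus $M-A_\pi$ is not diagonal. More generally, when two triangles $v_0v_iv_j$ and $v_0v_iv_k$ share the edge $v_0v_i$, their intersection number depends on the rotation at $v_i$ as well as at $v_0$, so it cannot be absorbed into a single-vertex chord-crossing pattern. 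The same failure occurs for your $K_{n,n}$ argument: with the double-star tree at $a_1,b_1$ every fundamental $4$-cycle contains the edge $a_1b_1$, so no two of them ``share essentially only $a_1$''.

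The paper takes a different route that avoids this problem. It does not try to compress the intersection data into a rotation at one vertex; instead part~(a) is deduced from part~(b) by passing to the bipartite subgraph $K_{\lfloor m/2\rfloor,\lfloor m/2\rfloor}\subset K_m$, and part~(b) is obtained (following \cite{FK19}, cf.~\cite{DS22}) from the relations for the $4$-cycles $nj'xn'$ recorded in Assertion~\ref{p:inters}. Those relations (notably \ref{p:inters}.2, that certain combinations are independent of $j,l$) are what replace your hoped-for vanishing on edge-sharing pairs; they hold precisely \emph{because} one works with the full family of cycles and exploits bilinearity, rather than trying to read everything off a single local picture. The paper's weaker linear bound $g\ge(m-4)/3$ via Theorem~\ref{p:rank3} illustrates the same philosophy: one uses \emph{all} triangles and the structural properties (triviality, linear dependence, non-triviality) of the resulting ${[m]\choose3}$-matrix, never reducing to a chord-crossing matrix at one vertex.
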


Theorem \ref{t:nonz2} is proved by showing that on a surface to which a large graph has a  $\Z_2$-embedding, the intersections of closed curves are sufficiently complicated (in the sense of rank of a certain matrix; cf. Assertion \ref{r:low}).
More precisely, 

$\bullet$ the weaker estimation $g\ge(m-4)/3$ for $K=K_m$ \cite{PT19} follows by Theorems \ref{t:hbetti}.a and \ref{p:rank3} together with Assertion \ref{r:low} (all below);  

$\bullet$ Theorem \ref{t:nonz2}.a follows by Theorem \ref{t:nonz2}.b (prove!);   

$\bullet$ Theorem \ref{t:nonz2}.b is proved in \cite{FK19} (see a well-structured exposition in \cite{DS22}).  

Analogously, Assertion \ref{r:low} and Theorem \ref{p:rank3}
(together with Theorem \ref{t:hbetti}.b) 
imply the non-$\Z_2$-embeddability of 
$K_7$ to the M\"obius band. (They also imply the non-embeddability of $K_7$ to the Klein bottle, which does not follow from the Euler inequality.) 
There is an analogous non-embeddability result in higher dimensions \cite{DS22}.  

Denote by $|X|_2\in\Z_2$ the parity of the number of elements in a finite set $X$.

Closed curves $\gamma_1,\ldots,\gamma_p$ on $S$ are said to be in \textbf{general position} if the graph drawing (of the disjoint union of $p$ cycles) formed by these curves is in general position. 
Their {\it intersection $p\times p$- matrix} $G$ is defined as 
$$G_{i,j}:=\begin{cases}|\gamma_i\cap\gamma_j|_2, & i\ne j,\\ |\gamma_i\cap\gamma_i'|_2, & i=j,\end{cases}$$ 
where $\gamma_i'$ is a curve close to $\gamma_i$ in general position to $\gamma_i$.

\begin{theorem}[Homology Betti Theorem]\label{t:hbetti} 
For any closed general position curves  on

(a) the sphere with $g$ handles the rank of their intersection matrix does not exceed $2g$. 

(b) the disk with $m$ M\"obius bands the rank of their intersection matrix does not exceed $m$.
\end{theorem}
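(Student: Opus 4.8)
The plan is to recognize the intersection matrix $G$ as the Gram matrix of the mod~$2$ intersection form of $S$ evaluated on the homology classes of $\gamma_1,\dots,\gamma_p$, and then to bound its rank by $\dim_{\Z_2}H_1(S;\Z_2)$, which equals $2g$ in case (a) and $m$ in case (b).

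First I would recall the mod~$2$ intersection form. On any $2$-surface $S$ there is a symmetric $\Z_2$-bilinear form $\langle\cdot,\cdot\rangle\colon H_1(S;\Z_2)\times H_1(S;\Z_2)\to\Z_2$ with the property that $\langle[\alpha],[\beta]\rangle=|\alpha\cap\beta|_2$ whenever $\alpha,\beta$ are $1$-cycles meeting transversally in finitely many points. The point that needs care is well-definedness: if $\alpha=\partial c$ for a mod~$2$ $2$-chain $c$ and $\beta$ is a cycle transverse to $c$, then $\beta\cap c$ is a compact $1$-manifold whose boundary is $\beta\cap\alpha$, so $|\alpha\cap\beta|_2=0$; bilinearity and independence of the chosen transverse representatives follow from this. (In the spirit of \S\ref{s:nondeg}--\S\ref{s:lowr} one may instead fix a triangulation of $S$ and check directly that the intersection count is unchanged when a strand is slid across a triangle.)

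Next I would identify $G$ with this form. For $i\ne j$ the definition gives $G_{i,j}=|\gamma_i\cap\gamma_j|_2=\langle[\gamma_i],[\gamma_j]\rangle$. For the diagonal, the curve $\gamma_i'$ close to $\gamma_i$ is homotopic, hence homologous, to $\gamma_i$, so $G_{i,i}=|\gamma_i\cap\gamma_i'|_2=\langle[\gamma_i],[\gamma_i']\rangle=\langle[\gamma_i],[\gamma_i]\rangle$. Thus $G_{i,j}=\langle[\gamma_i],[\gamma_j]\rangle$ for all $i,j$. Now fix a basis $e_1,\dots,e_d$ of $H_1(S;\Z_2)$, where $d:=\dim_{\Z_2}H_1(S;\Z_2)$, let $A\in\Z_2^{d\times p}$ have the coordinate vector of $[\gamma_i]$ as its $i$-th column, and let $J\in\Z_2^{d\times d}$ be the Gram matrix $J_{k,l}=\langle e_k,e_l\rangle$. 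Bilinearity gives $G=A^{T}JA$, whence $\rk G\le\rk A\le d$, since the rank of a product is at most the rank of each factor and $A$ has $d$ rows (cf. Assertions \ref{p:rk-base} and \ref{p:dred}).

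Finally I would compute $d$. The sphere with $g$ handles is a closed orientable surface of genus $g$, so $d=\dim_{\Z_2}H_1(S;\Z_2)=2g$, proving (a). The disk with $m$ M\"obius bands deformation retracts onto the wedge of the $m$ core circles of its ribbons, so $d=m$, proving (b). The main obstacle is the first step: pinning down that a mod~$2$ count of transverse intersection points is an invariant of $\Z_2$-homology classes, including the self-intersection number computed via a pushoff; once that is in place, the remainder is the linear algebra over $\Z_2$ already developed above.
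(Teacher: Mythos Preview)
Your proposal is correct and is exactly the standard homological argument that the theorem's name (``Homology Betti Theorem'') points to; the paper itself states the result without proof, treating it as known, so there is no alternative approach to compare against. One minor wording quibble: in (b) the surface does not literally retract onto the core arcs of the ribbons alone but onto a $1$-complex consisting of those cores together with arcs in the disk, which is homotopy equivalent to a wedge of $m$ circles---your conclusion $d=m$ is unaffected.
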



\begin{figure}[h]\centering
\includegraphics[scale=.8]{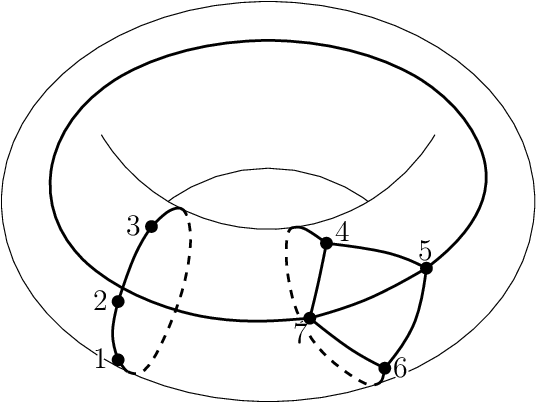}\qquad
\includegraphics[scale=.8]{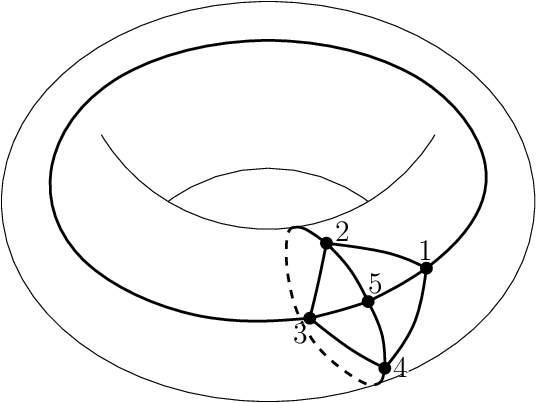}
\caption{Left: $K_3$ and $K_4$ on the torus.
Right: $K_5$ on the torus}
\label{f:relations}
\end{figure}

\begin{problem}\label{r:low} 
Take any embedding (or $\Z_2$-embedding) $f\colon K_n\to S$.  
Take any map $f'\colon K_n\to S$ `in general position' 
to $f$, and close to $f$.
For any pairwise different numbers $i,j,k\in[n]$ denote by $\langle ijk\rangle$ the cycle of length $3$ in $K_n$ passing through $i,j,k$. 
Let 
\[
ijk\wedge pqr := \bigl\lvert f\langle ijk\rangle\cap f'\langle pqr\rangle\bigr\rvert_2.
\]
Then 

(\ref{r:low}.1) \quad $123\wedge456=0$;   

(\ref{r:low}.2) \quad $123\wedge456 + 123\wedge567 + 123\wedge467 + 123\wedge457 = 0$; 
    
\phantom{(\ref{r:low}.2) }\quad $123\wedge345 + 123\wedge346 + 123\wedge356 + 123\wedge456 = 0$.
    
\phantom{(\ref{r:low}.2) }\quad $123\wedge234 + 123\wedge235 + 123\wedge245 +  123\wedge345 = 0$.
    
\phantom{(\ref{r:low}.2) }\quad $123\wedge123 + 123\wedge124 + 123\wedge134 + 123\wedge234 = 0$.

(See Figure \ref{f:relations}, left. 
This follows from $abc\oplus abd\oplus acd\oplus bcd=\emptyset$. 
For one formula covering these four formulas see the linear dependence property of \S\ref{s:comhyp}.)     

(\ref{r:low}.3) \quad $125\wedge345 + 135\wedge245 + 145\wedge235 = 1$.
 
See Figure \ref{f:relations}, right. 
This is easily deduced from (B) below. 
\end{problem}

We denote by $\oplus$ the mod~$2$ summation (i.e., the symmetric difference) of sets. 


\smallskip
\textbf{Remark.}
(A) For any pairwise distinct points $A_1,A_2,A_3,A_4$ in the line there is exactly one `intertwined' coloring into two colors.

(B) For any pairwise distinct points $A_1,A_2,A_3,A_4$ on the circle 
$$\lvert A_1A_2\cap A_3A_4\rvert+\lvert A_1A_3\cap A_2A_4\rvert+\lvert A_1A_4\cap A_2A_3\rvert=1.$$

(B') For any `general position' map $f\colon K_5\to\R^2$ the number of intersection points in $\R^2$ formed by images of disjoint edges is odd. 

A simple deduction of $(A)\Rightarrow(B')$ is presented in \cite{Sk14} (for the linear case; for the PL case the deduction is analogous). 
Observe that (B') does not follow from Euler's formula for planar graphs. 
Analogously, the non-$\Z_2$-embeddability to surfaces (unlike the non-embeddability) does not follow from the Euler inequality for surfaces \cite[\S2.4]{Sk20}. 

\begin{problem}\label{p:inters}
Take any embedding (or $\Z_2$-embedding) $f\colon K_{n,n}\to S$.  
Take any map $f'\colon K_{n,n}\to S$ `in general position' to $f$, and close to $f$.  
Denote by $a'$ a copy of an object $a$, e.~g. the parts of $K_{n,n}$ are $[n]$ and $[n]'$.
For any different numbers $a,i\in[n]$ and $b,j\in[n]$ denote by $\langle aj'ib'\rangle$ the cycle of length $4$ in $K_{n,n}$ passing through $a,j',i,b'$.
Let 
\[
aj'ib'\wedge cl'kd' := \bigl\lvert f\langle aj'ib'\rangle\cap f'\langle cl'kd'\rangle\bigr\rvert_2.
\]
Then 
  
(\ref{p:inters}.1) \quad $32'23'\wedge31'13' + 31'23'\wedge32'13' = 1$.

(\ref{p:inters}.2) For any pairwise distinct $x,y,z\in[n-1]$, the residue $nj'xn'\wedge nl'yn' + nj'zn'\wedge nl'yn'$ is independent of distinct $j,l\in[n-1]$.
\end{problem}


\section{Rank of matrix with relations}\label{s:comhyp}

We shorten $\{i\}$ to $i$.
An {\it ${[m]\choose 3}$-matrix} is a symmetric square matrix with $\Z_2$-entries whose rows and whose columns correspond to all $3$-element subsets of $[m]$, and for which the following properties hold:  

(triviality) $A_{P,Q}=0$ if $P\cap Q=\emptyset$;

(linear dependence) for each $4$-element and $3$-element subsets $F,P\subset [m]$
$$\sum\limits_{i\in F} A_{F-i,P}=0.$$

(non-triviality) for each $i\in[m]$ and $4$-element subset $F\subset [m]-i$ we have $A_{F,i}=1$, where
$$A_{F,i}: = \sum\limits_{\{X,Y\}\ :\ F\cup i=X\cup Y,\ X\cap Y=i,\ |X|=|Y|=3} A_{X,Y}.$$ 
By Assertion \ref{r:low}, an ${[m]\choose 3}$-matrix is constructed from a $\Z_2$-embedding $f\colon K_m\to S$ to a surface. 
Indeed, set  $A^f_{\{i,j,k\},\{p,q,r\}}:=ijk\wedge pqr$. 
If the surface $S$ is orientable, then the constructed matrix $A^f$ is even (i.~e., $A^f_{P,P}=0$ for each $3$-element subset $P\subset [m]$).



\begin{theorem}[\cite{PT19}]\label{p:rank3}
(a) If $A$ is an ${[m]\choose 3}$-matrix, then $\rk A\ge\dfrac{m-4}3$.

(b) If, moreover, $A$ is even, then $\rk A\ge\dfrac{2(m-4)}5$.
\end{theorem}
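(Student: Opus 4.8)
The plan is to exploit the three defining properties of an ${[m]\choose 3}$-matrix to bound the number of rows/columns ``explained'' by a basis of small rank. First I would fix a set of $\rk A$ columns of $A$ spanning the column space (Assertion \ref{p:rk-base}.a), say indexed by $3$-element subsets $Q_1,\dots,Q_r$, and let $T:=Q_1\cup\dots\cup Q_r$, so $|T|\le 3r$. The idea is that the triviality property, together with the non-triviality property, forces $T$ to ``meet essentially all of $[m]$''; more precisely, I would try to show that if some letter $i\in[m]$ lies outside a sufficiently large portion of $T$, the non-triviality relation $A_{F,i}=1$ (for a $4$-set $F$ chosen from $[m]-i$) cannot be satisfied, because triviality kills too many of the summands $A_{X,Y}$ and the surviving ones are controlled by the span. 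This should give $m\le 3r+c$ for an explicit small constant $c$, i.e. $\rk A=r\ge (m-c)/3$.

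The key steps, in order: (1) reduce the non-triviality identity to a statement about a single row $A_{-,i}$ by noting that each $A_{X,Y}$ appearing in $A_{F,i}$ has $i\in X\cap Y$, so all the relevant entries lie in rows indexed by $3$-sets containing $i$; (2) use the linear dependence property to propagate: for a fixed $4$-set $F$ and varying third argument, the relation $\sum_{i\in F}A_{F-i,P}=0$ lets me rewrite an entry $A_{P,Q}$ in terms of entries whose first index uses letters of $F$, and iterating this should reduce any entry to a combination of entries both of whose indices lie inside $T\cup F$; (3) run a counting/pigeonhole argument: if $|[m]\setminus T|$ is large, pick $i$ and a $4$-set $F$ entirely inside $[m]\setminus T$, and show every surviving summand of $A_{F,i}=1$ vanishes by triviality (since its two $3$-subsets are disjoint from the spanning columns after the reduction), contradicting non-triviality. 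For part (b), the refinement to $\frac{2(m-4)}5$ should come from the extra hypothesis that $A$ is even ($A_{P,P}=0$): this kills the ``diagonal'' contributions and should let one show a spanning set of columns must involve at least $\tfrac52$ new letters per pair of columns on average — concretely, one argues that two columns indexed by $3$-sets sharing two letters are linearly dependent modulo the even/triviality relations, so a minimal spanning set has pairwise intersections of size $\le 1$, improving $|T|\le 3r$ to $|T|\gtrsim \tfrac52 r$.

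The main obstacle I anticipate is step (2): making precise and bookkeeping-clean the claim that the linear dependence relations suffice to express an arbitrary entry $A_{P,Q}$ in terms of entries supported on $T$ (or on $T$ together with a bounded number of auxiliary letters). The linear dependence relation only changes one index at a time and only within a prescribed $4$-set, so one needs a careful induction on $|P\cup Q|\setminus T|$ or a clever choice of the $4$-sets $F$ driving the reduction, and one must check the reduction does not re-introduce letters outside $T$. A secondary subtlety is verifying that the entries one lands on after reduction are genuinely forced to be $0$ by triviality rather than merely unconstrained — this requires that the two $3$-subsets indexing such an entry end up disjoint, which is exactly where the bound $|T|\le 3r$ (resp. the sharper even bound) gets used, so the argument is somewhat circular-looking and must be organized as a single extremal/counting inequality rather than two separate steps. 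I would also double-check the constant: the statement has $m-4$, suggesting the clean bound is $3r+4\ge m$ (equivalently four ``free'' letters are always allowed), and the even case $\tfrac52 r+4\ge m$; pinning down that additive $4$ will require examining small cases directly.
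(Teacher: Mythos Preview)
Your plan for (a) is essentially correct and is \emph{different and cleaner} than the paper's route, but you are making it harder than it is. Step~(2) is not needed at all. Once you fix spanning columns $Q_1,\dots,Q_r$ and set $T=\bigcup_j Q_j$, triviality alone gives: if a $3$-set $R$ satisfies $R\cap T=\emptyset$, then $A_{R,Q_j}=0$ for every $j$, hence the entire row $R$ of $A$ is zero (every column is a sum of the $Q_j$). No ``propagation via linear dependence'' is required. Now if $|[m]\setminus T|\ge 5$, pick $i$ and a $4$-set $F$ inside $[m]\setminus T$; every pair $\{X,Y\}$ occurring in $A_{F,i}$ has $X,Y\subset F\cup i\subset[m]\setminus T$, so $A_{X,Y}=0$ by the previous sentence, contradicting $A_{F,i}=1$. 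Thus $m-|T|\le 4$ and $3r\ge |T|\ge m-4$. The ``main obstacle'' you anticipate is a phantom; drop step~(2). By contrast, the paper proceeds inductively: it locates a single $3$-set $X$ with $A_{X,X}=1$ (or, if none exists, passes to the even case), restricts $A$ to the $3$-subsets of $[m]\setminus X$, observes the rank drops by at least $1$ (Assertion~\ref{p:step-alg3}.a), and applies induction. Your counting argument bypasses this recursion.

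For (b) there is a genuine gap. Your claimed mechanism --- ``two columns indexed by $3$-sets sharing two letters are linearly dependent modulo the even/triviality relations'' --- is false. The linear-dependence axiom is a \emph{four}-term relation
\[
A_{\{a,b,c\},P}+A_{\{a,b,d\},P}+A_{\{a,c,d\},P}+A_{\{b,c,d\},P}=0,
\]
and it does not collapse to a two-term relation between columns $\{a,b,c\}$ and $\{a,b,d\}$; evenness does not change this. So the asserted bound $|T|\lesssim \tfrac52 r$ is unjustified. A correct way to push your idea through does exist: choose the $Q_j$ so that the principal $r\times r$ submatrix $G=(A_{Q_i,Q_j})$ is non-degenerate; since $G$ is symmetric with zero diagonal over $\Z_2$, one has $\det G=\mathrm{Pf}(G)^2$, so non-degeneracy forces the support graph of $G$ to contain a perfect matching, and each matched pair $Q_i,Q_j$ satisfies $Q_i\cap Q_j\ne\emptyset$ by triviality, giving $|T|\le 5r/2$. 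But this is not what you wrote, and it needs both the ``principal minor of full rank'' fact and the Pfaffian/matching step. The paper's proof of (b) is again inductive: for even $A\ne0$ pick $X,Y$ with $A_{X,Y}=1$ (so $|X\cup Y|\le5$ by triviality), show $\rk A\ge\rk C+2$ for the restriction $C$ to $[m]\setminus(X\cup Y)$ (Assertion~\ref{p:step-alg3}.b), and recurse to get $\widetilde r_m\ge\widetilde r_{m-5}+2$.
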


Proofs of the following particular cases of Theorem \ref{p:rank3} lead to the general case.
You can deduce Theorem~\ref{p:rank3} from Proposition~\ref{p:rankest3a}.a,b.
For stronger estimations see \S\ref{s:comhypge}.

\begin{problem}\label{p:motno}
    (a)* There are no ${[7]\choose 3}$-matrices of rank $1$. 
    
    (b)* There are no even ${[8]\choose 3}$-matrices of rank smaller than $3$.
\end{problem}

\begin{example}\label{p:mot}
    (a) There is a non-zero ${[4]\choose 3}$-matrix.
    
    (b) There is a ${[5]\choose 3}$-matrix.
    
    (c) For any $m\ge5$ there is an ${[m]\choose 3}$-matrix.
    
    (a',b',c') The same for even matrices.
    
    (d) There is a ${[5]\choose 3}$-matrix of rank $1$.
    
    (e) There is an even ${[5]\choose 3}$-matrix of rank $2$.
     
    (f) There is a ${[6]\choose 3}$-matrix of rank $1$.
    
    (g)* There is an ${[8]\choose 3}$-matrix of rank greater than $2$.

In the proof one does not need to explicitly give the matrix, just describe the construction.
We only know a proof using Assertions \ref{p:draw}, \ref{r:low}, and Theorem \ref{t:hbetti}.  

(a) Consider the $K_4$-subgraph of $K_5$ with vertices 1, 2, 3, 4 together with its embedding to the torus defined by Figure~\ref{f:relations}, right side. Then $A^f$ is a non-trivial $[m]\choose 3$-matrix. 

(b) Let $f$ be the embedding of $K_5$ into the torus defined by the right side of Figure~\ref{f:relations}. 
Then $A^f$ has the required property. 
 
(c) By Assertion~\ref{p:draw}.c there exists an embedding $f\colon K_m \to S$ where $S$ is a sphere with several handles. 
Then $A^f$ has  the required property. 

(a', b', c') The matrices $A^f$ from (a, b, c) respectively satisfy the needed conditions. 

Parts (d, e, f) follow from Assertion~\ref{pm:tr5}.a, b. 
\end{example}


\begin{problem}[obvious]\label{p:hered3}
    Let $A'$ be the square matrix of size ${m-1\choose 3}$ obtained from an ${[m]\choose 3}$-matrix by deleting rows and columns corresponding to all subsets containing $m$.
    Then $A'$ is an ${[m-1]\choose 3}$-matrix.
\end{problem}

\begin{problem}\label{p:step-alg3}
    (a) Let $B$ be the square matrix of size ${m-3\choose 3}$ obtained from an ${[m]\choose 3}$-matrix $A$ by deleting rows and columns corresponding to subsets containing at least one element of $X:=\{m,m-1,m-2\}$.
    If $A_{X,X}=1$, then $\rk A>\rk B$.

    (b) Let $C$ be the square matrix obtained from an ${[m]\choose 3}$-matrix $A$ by deleting rows and columns corresponding to subsets containing at least one element of certain $3$-element subsets $X,Y\subset[m]$.
    If $A_{X,X}=A_{Y,Y}=0$ and $A_{X,Y}=1$, then $\rk A\ge \rk C+2$.
\end{problem}

Denote by $r_m$ the minimal rank of an ${[m]\choose 3}$-matrix.
Denote by $\t{r_m}$ the minimal rank of an even ${[m]\choose 3}$-matrix.
Clearly, $r_m=\t{r_m}=0$ for $m\le4$, and $r_m\le\t{r_m}$. 
The non-triviality implies that   $r_5,\t{r_5}\ge1$. 
Theorem \ref{p:rank3} asserts that $r_m\ge\dfrac{m-4}3$ and $\t{r_m}\ge\dfrac{2(m-4)}5$.

\begin{pr}\label{pm:tr5} 
    (a,b) Find $r_5,r_6$ and $\t{r_5},\t{r_6},\t{r_7}$.
    
    (c) Both sequences $r_m,\t{r_m}$ are non-decreasing. 
\end{pr}

\begin{proposition}\label{p:rankest3a}
    (a) $r_m\ge\min\{r_{m-3}+1,\t{r_m}\}$ (more precisely, either $r_m=\t{r_m}$ or $r_m\ge r_{m-3}+1$);
    
    (b) $\t{r_m}\ge \t{r_{m-5}}+2$.
    
\end{proposition}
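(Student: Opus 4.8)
The plan is to prove both parts by iterating the structural reductions of Assertion~\ref{p:step-alg3} together with the hereditary property of Assertion~\ref{p:hered3}, and then to combine them with the non-triviality axiom to force the existence of a reducible configuration.

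For part (b): let $A$ be an even ${[m]\choose3}$-matrix realizing $\rk A=\t{r_m}$. The key is to locate two $3$-element subsets $X,Y\subset[m]$ with $A_{X,X}=A_{Y,Y}=0$ (automatic, since $A$ is even) and $A_{X,Y}=1$, then apply Assertion~\ref{p:step-alg3}.b to get $\rk A\ge\rk C+2$, where $C$ is an ${[m-5]\choose3}$-matrix (note $X,Y$ overlap in exactly one element, so together they involve $5$ letters; I would want to check that deleting rows/columns meeting $X\cup Y$ leaves an ${[m-5]\choose3}$-matrix, which should follow as in Assertion~\ref{p:hered3}). Since $C$ is again even, $\rk C\ge\t{r_{m-5}}$, giving $\t{r_m}\ge\t{r_{m-5}}+2$. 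The existence of such a pair $X,Y$ is where non-triviality enters: if every pair of $1$-overlapping triples had $A_{X,Y}=0$, then for any $5$-element set $F\cup i$ the quantity $A_{F,i}=\sum A_{X,Y}$ (sum over the relevant splittings, all of which are $1$-overlapping pairs summing to $i$) would vanish, contradicting non-triviality $A_{F,i}=1$. So picking $F\cup i$ any $5$-subset and expanding $A_{F,i}=1$ produces a nonzero summand $A_{X,Y}=1$ with $X\cap Y=i$, hence $|X|=|Y|=3$ and $A_{X,X}=A_{Y,Y}=0$ by evenness — exactly the configuration needed.

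For part (a): again take $A$ realizing $\rk A=r_m$. If $A$ happens to be even, then $\rk A\ge\t{r_m}$ and we are in the first alternative $r_m=\t{r_m}$ (or rather $r_m\ge\t{r_m}$, but $r_m\le\t{r_m}$ always holds, so $r_m=\t{r_m}$). Otherwise $A$ is not even, so there is a $3$-element subset $P$ with $A_{P,P}=1$. I would like to use Assertion~\ref{p:step-alg3}.a with $X=P$ — but that statement is phrased for the specific triple $X=\{m,m-1,m-2\}$; the fix is to relabel $[m]$ by a permutation carrying $P$ to $\{m,m-1,m-2\}$, which preserves the ${[m]\choose3}$-matrix axioms (they are symmetric under relabeling). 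Then Assertion~\ref{p:step-alg3}.a gives $\rk A>\rk B$ where $B$ is obtained by deleting all rows/columns meeting $P$, so $B$ is an ${[m-3]\choose3}$-matrix (Assertion~\ref{p:hered3} applied three times), whence $\rk B\ge r_{m-3}$ and $r_m=\rk A\ge r_{m-3}+1$. Combining the two cases yields $r_m\ge\min\{r_{m-3}+1,\t{r_m}\}$ and the sharper dichotomy.

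The main obstacle I anticipate is the bookkeeping in part~(b): verifying that when $X,Y$ share exactly the element $i$, the matrix $C$ of Assertion~\ref{p:step-alg3}.b — obtained by deleting all triples meeting $X\cup Y$ — is genuinely an ${[m-5]\choose3}$-matrix on the remaining $m-5$ letters, i.e.\ that triviality, linear dependence, and non-triviality all restrict correctly. This is implicitly the content of Assertion~\ref{p:hered3} generalized from deleting one letter to deleting five, and I would expect it to go through routinely, but it is the step most in need of care. A secondary subtlety is making sure the non-triviality expansion $A_{F,i}=\sum_{\{X,Y\}}A_{X,Y}$ really ranges only over pairs of $3$-element sets meeting in $i$ (so that evenness applies to each of $X,Y$), which is exactly the definition given, so this should be immediate.
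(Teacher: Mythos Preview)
Your proof is correct and follows essentially the same route as the paper's. One minor simplification the paper makes in part~(b): rather than extracting the nonzero pair $X,Y$ from the non-triviality sum $A_{F,i}=1$, the paper just observes that non-triviality forces $A\ne0$, takes any $X,Y$ with $A_{X,Y}=1$, and then uses the triviality axiom to conclude $X\cap Y\ne\emptyset$, hence $|X\cup Y|\le5$ (so $C$ is an ${[m']\choose3}$-matrix for some $m'\ge m-5$, and monotonicity of $\t{r}$ finishes). Your way of pinning down $|X\cap Y|=1$ exactly is fine too and avoids the appeal to monotonicity; the bookkeeping you flag as the main obstacle is indeed routine via iterated use of Assertion~\ref{p:hered3}.
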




    

\emph{Hints and sketches of some proofs.}

\smallskip
\textbf{\ref{p:rank3}.} (b) Induction on $m$. 
The base $m\le4$ is clear. 
By Assertion~\ref{p:rankest3a}.b and inductive hypothesis we have
\[
    \t{r_m}\ge\t{r_{m-5}}+2\ge \dfrac{2(m-5-4)}5+2 = \dfrac{2(m-4)}5.
\]

(a) Induction on $m$. 
The base $m\le4$ is clear.
By Assertion~\ref{p:rankest3a}.a, Theorem~\ref{p:rank3}.b and inductive hypothesis we have
\[
    r_m\ge\min\left\{r_{m-3}+1,\t{r_m}\right\}\ge \min\left\{\dfrac{m-3-4}3+1,\dfrac{2(m-4)}5\right\} = \dfrac{m-4}3.
\]

\smallskip
\textbf{\ref{pm:tr5}.} (a) Recall that if $m\ge5$ then every $[m]\choose 3$-matrix is not a zero matrix. 
This implies that $r_5, r_6 \geq 1$. 
Let $f$ be an embedding defined by Assertion~\ref{p:draw}.b1, b2. 
It follows from Theorem~\ref{t:hbetti} that $\rk A^f\le 1$. 
Hence $r_5=r_6=1$. 

(b) Assertion~\ref{t:rk1}.c implies that $\t{r_5}, \t{r_6}, \t{r_7}\geq 2$. 
We have $\rk A^f=2$ for $f$ defined by Assertion~\ref{p:draw}.a1, a2, a3 and hence $\t{r_5}=\t{r_6}=\t{r_7}=2$.

(c) Let $A$ be an $[m]\choose 3$-matrix and let $B$ be an $[m-1]\choose 3$-submatrix of $A$ introduced in Assertion~\ref{p:hered3}. We have $\rk B\le \rk A$ and therefore $r_m\geq r_{m-1}$. 

If $A$ is even then $B$ is even and therefore $\t{r_m}\geq \t{r_{m-1}}$.

\smallskip
\textbf{\ref{p:rankest3a}.} (Take $l=3$.)
(a) Take an $[m]\choose l$-matrix $A$ such that $\rk A=r_m$.
If $A$ is even, then $r_m=\t{r_m}$, so we are done.
Otherwise, there is an $l$-element subset $X\subset [m]$ such that $A_{X,X}=1$.
Let $B$ be the `restriction' of $A$ to $l$-element subsets of $[m]-X$.
Then
\[
    r_m=\rk A\ge\rk B+1\ge r_{m-l}+1,    \quad\text{where}
\]

$\bullet$ the first inequality follows by Assertion~\ref{p:step-alg3}.a; 

$\bullet$ the second inequality holds because $B$ is a $[m]-X\choose l$-matrix by Assertion~\ref{p:hered3}.

(b) Take an even $[m]\choose l$-matrix $A$ such that $\rk A=\t{r_m}$.
By the non-triviality $A\ne0$.
Hence there are $l$-element subsets $X,Y\subset[m]$ such that $A_{X,Y}=1$.
Let $C$ be the `restriction' of $A$ to $l$-element subsets of $[m]-X-Y$.
Then
\[
    \t{r_m}=\rk A\ge\rk C+2\ge \t{r_{m-2l+1}}+2,   \quad\text{where}
\]

$\bullet$ the first inequality follows by Assertion~\ref{p:step-alg3}.b;

$\bullet$ the second inequality holds because $C$ is a $[m]-X-Y\choose l$-matrix by Assertion~\ref{p:hered3}, and because $A_{X,Y}=1$, so by the triviality $X\cap Y\ne\emptyset$, hence $|[m]-X-Y|\ge m-2l+1$. 



\section{Rank of matrix with relations: generalization}\label{s:comhypge}

The following results are `higher-dimensional' (and stronger) generalizations of Theorem~\ref{p:rank3}, Assertions~\ref{p:hered3} and~\ref{p:step-alg3}, and Proposition~\ref{p:rankest3a}.
They give a simplified well-structured exposition of \cite[Theorem 1]{PT19}. 

An {\it ${[m]\choose l}$-matrix} is a symmetric square matrix with $\Z_2$-entries whose rows and whose columns correspond to all $l$-element subsets of $[m]$, and for which (triviality)  and the following properties hold:  

(linear dependence) for each $(l+1)$-element and $l$-element subsets $F,P\subset [m]$
$$\sum\limits_{i\in F} A_{F-i,P}=0.$$

(non-triviality) for each $i\in[m]$ and $(2l-2)$-element subset $F\subset [m]-i$ we have $A_{F,i}=1$, where
$$A_{F,i}:= \sum\limits_{\{X,Y\}\ :\ F\cup i=X\cup Y,\ X\cap Y=i,\ |X|=|Y|=l} A_{X,Y} = \sum\limits_{\{\sigma,\tau\}\ :\ F=\sigma\sqcup \tau,\ |\sigma|=l-1} A_{i\sqcup \sigma,i\sqcup\tau}.$$

Analogously to Assertion \ref{r:low}, an ${[m]\choose l}$-matrix is constructed by a $\Z_2$-embedding of the $(l-1)$-dimensional skeleton of the $(m-1)$-dimensional simplex to a $2(l-1)$-dimensional manifold.  

\begin{theorem}[\cite{PT19}]\label{p:rank}
    Suppose $l\ge3$ and $A$ is an ${[m]\choose l}$-matrix. 
    
    (a) Then $\rk A\ge\dfrac{m-2l+2}{l-1}$. \quad
    (b) If, moreover, $A$ is even, then $\rk A\ge\dfrac{2(m-2l+2)}l$.
\end{theorem}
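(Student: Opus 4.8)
The plan is to mimic exactly the structure used for the $l=3$ case (Theorem~\ref{p:rank3}), lifting every step to general $l\ge3$. The whole argument reduces to proving the two "restriction" lemmas --- the analogues of Assertions~\ref{p:hered3} and \ref{p:step-alg3} --- and the analogue of Proposition~\ref{p:rankest3a}; Theorem~\ref{p:rank} then follows by the same two-line inductions written out in the hints. So first I would record the general "heredity" statement: if $A$ is an ${[m]\choose l}$-matrix and we delete all rows/columns indexed by $l$-subsets meeting a fixed subset $T\subset[m]$, the resulting matrix is an ${[m]-T\choose l}$-matrix. Triviality and linear dependence are inherited verbatim (they only involve subsets of $[m]-T$ once we restrict $F,P$ there), and non-triviality for $[m]-T$ is literally a special case of non-triviality for $[m]$ applied to $i,F\subset[m]-T$. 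This is routine.

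Next I would prove the general "rank drop" lemmas. For part analogous to \ref{p:step-alg3}.a: let $X$ be an $l$-subset with $A_{X,X}=1$, and let $B$ be the restriction to $l$-subsets of $[m]-X$. I claim $\rk A>\rk B$. The idea: the row (equivalently column, by symmetry) of $A$ indexed by $X$ cannot lie in the span of the rows indexed by $l$-subsets disjoint from $X$; if it did, then since every such row has a $0$ in column $X$ (by triviality, as those subsets are disjoint from $X$), the diagonal entry $A_{X,X}$ would be forced to be $0$, contradicting $A_{X,X}=1$. Hence adjoining row/column $X$ to the submatrix on $[m]-X$ strictly increases rank. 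Actually one must be slightly more careful: $B$ is the restriction to $l$-subsets of $[m]-X$, and we want $\rk A \ge \rk B + 1$; one argues that a maximal nondegenerate submatrix of $B$ together with row/column $X$ is still nondegenerate, using that $A_{X,X}=1$ and all entries $A_{X,Z}$ with $Z\subset[m]-X$ vanish. For the analogue of \ref{p:step-alg3}.b: if $A_{X,X}=A_{Y,Y}=0$ and $A_{X,Y}=1$ for $l$-subsets $X,Y$ (necessarily $X\cap Y\ne\emptyset$ by triviality), and $C$ is the restriction to $l$-subsets of $[m]-X-Y$, then $\rk A\ge \rk C+2$, because the $2\times2$ block on rows/columns $\{X,Y\}$ is $\begin{pmatrix}0&1\\1&0\end{pmatrix}$, which is nondegenerate, and again all cross-entries between $\{X,Y\}$ and $l$-subsets of $[m]-X-Y$ vanish by triviality; adjoining this block to a maximal nondegenerate submatrix of $C$ keeps nondegeneracy. (Here I would use the characterization of rank as the maximal size of a nondegenerate square submatrix, Assertion~\ref{p:dred}.d.)

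Then Proposition's analogue: taking $l$ as given, if $A$ is an ${[m]\choose l}$-matrix of minimal rank $r_m^{(l)}$ that is \emph{not} even, pick $X$ with $A_{X,X}=1$ and restrict to $[m]-X$ to get, by heredity, an ${[m]-l\choose l}$-matrix; combined with the first rank-drop lemma this gives $r_m^{(l)}\ge r_{m-l}^{(l)}+1$, so in all cases $r_m^{(l)}\ge\min\{r_{m-l}^{(l)}+1,\t r_m^{(l)}\}$. For the even case, non-triviality forces $A\ne0$, so some $A_{X,Y}=1$ with $X\cap Y\ne\emptyset$; restricting to $[m]-X-Y$ (which has $\ge m-2l+1$ elements) and using the second rank-drop lemma gives $\t r_m^{(l)}\ge \t r_{m-2l+1}^{(l)}+2$. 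Feeding these into the two inductions on $m$ exactly as in the hints for \ref{p:rank3} yields $\rk A\ge\frac{m-2l+2}{l-1}$ in general and $\rk A\ge\frac{2(m-2l+2)}{l}$ in the even case (the base cases $m\le 2l-2$ being vacuous since the bound is then $\le0$, consistent with the zero matrix being an ${[m]\choose l}$-matrix --- one should check the non-triviality condition is vacuously satisfiable for $m\le 2l-2$, i.e.\ there is no room for a $(2l-2)$-subset disjoint from $i$, except the borderline $m=2l-1$ which needs a short separate check or is absorbed into the induction start).

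The main obstacle I anticipate is the verification that the $2\times2$-block (or single diagonal-entry) extensions genuinely preserve non-degeneracy --- i.e.\ that a nondegenerate submatrix of $C$ (resp.\ $B$) together with the extra rows and columns indexed by $X$ (resp.\ $X,Y$) is still nondegenerate. This is where triviality is used crucially: the off-block entries linking $\{X,Y\}$ to subsets of $[m]-X-Y$ are all $0$, so a vanishing linear combination of columns of the enlarged matrix splits into a combination of the $C$-columns plus the two new columns, and the $\begin{pmatrix}0&1\\1&0\end{pmatrix}$ block (resp.\ the $1$) forces the new coefficients to vanish, reducing to non-degeneracy of $C$. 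Getting this block-matrix bookkeeping right over $\Z_2$ --- and making sure "restriction to $l$-subsets of $[m]-X-Y$" is exactly the sub-object to which heredity applies --- is the only genuinely non-mechanical part; everything else is a transcription of the $l=3$ proof with $3$ replaced by $l$, $4$ by $2l-2$, $5$ by $2l-1$, and $6$ by $2l$.
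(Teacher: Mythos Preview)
Your inductions do not close for the bounds claimed in Theorem~\ref{p:rank}; they only give the weaker bounds $\rk A\ge\dfrac{m-2l+2}{l}$ and (in the even case) $\rk A\ge\dfrac{2(m-2l+2)}{2l-1}$, which the paper explicitly flags as what the naive restrictions (Assertion~\ref{p:step-alg-l}.a,b) produce. Check the arithmetic: from $\t r_m\ge \t r_{m-2l+1}+2$ and the inductive hypothesis you get
\[
\t r_m\ \ge\ 2+\frac{2((m-2l+1)-2l+2)}{l}\ =\ \frac{2m-6l+6}{l},
\]
which is strictly less than the target $\dfrac{2(m-2l+2)}{l}=\dfrac{2m-4l+4}{l}$ for every $l\ge 2$. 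Likewise $r_m\ge r_{m-l}+1$ yields only $\dfrac{m-2l+1}{l-1}$, one short of the target $\dfrac{m-2l+2}{l-1}$. The step ``feeding these into the two inductions \ldots\ yields $\rk A\ge\frac{m-2l+2}{l-1}$'' is simply false.

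What is missing is the sharper pair of recursions $r_m\ge r_{m-l+1}+1$ and $\t r_m\ge \t r_{m-l}+2$ (Proposition~\ref{p:rankest}), and these cannot be obtained by merely deleting rows and columns: restricting to $[m]-X$ or $[m]-X-Y$ throws away too many ground-set elements. The paper instead uses an \emph{orthogonal-projection} trick (Assertion~\ref{p:step-alg-l}.a$'$,b$'$). In case (a$'$), rather than passing to the submatrix on $l$-subsets of $[m]-X$, one defines $D_{P,Q}:=A_{P,Q}+A_{P,X}A_{Q,X}$ on $l$-subsets of $[m-l+1]$ and shows $\rk D<\rk A$ \emph{and} that $D$ is again an ${[m-l+1]\choose l}$-matrix; the non-trivial part is checking that $D$ still satisfies non-triviality, which requires the auxiliary fact (Proposition~\ref{p:rankher}.b) that $A_{F,i}$ depends only on $F\sqcup i$. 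In case (b$'$), one similarly projects so as to drop only $X$ (losing $l$ elements) while still getting a rank drop of $2$. Your block-matrix argument for the rank drop is fine, but it is paired with the wrong heredity step; the real content of Theorem~\ref{p:rank} beyond Theorem~\ref{p:rank3} lies precisely in this projection construction.
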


You can deduce Theorem \ref{p:rank} from Propositions \ref{p:rankest}.a,b.

The following Assertions~\ref{p:hered-l} and \ref{p:step-alg-l}.a,b generalize  
Assertions~\ref{p:hered3} and~\ref{p:step-alg3}.

\begin{problem}[obvious]\label{p:hered-l}
    Let $A'$ be the square matrix of size ${m-1\choose l}$ obtained from an ${[m]\choose l}$-matrix by deleting rows and columns corresponding to all subsets containing $m$.
    Then $A'$ is an ${[m-1]\choose l}$-matrix.
\end{problem}
 
\begin{problem}\label{p:step-alg-l}
    Let $A$ be an ${[m]\choose l}$-matrix and $X:=\{m-l+1,m-l+2,\ldots,m\}$.

    (a,b') Let $B$ be the square matrix of size ${m-l\choose l}$ obtained from $A$ by deleting rows and columns corresponding to subsets containing at least one of the elements of $X$.
    
    If $A_{X,X}=1$, then $\rk A>\rk B$.
    
    If $A_{X,X}=A_{Y,Y}=0$ and $A_{X,Y}=1$ for some $Y\subset[m]$, then $\rk A\ge\rk B+2$.
    
    (b) Let $C$ be the square matrix obtained from $A$ by deleting rows and columns corresponding to subsets containing at least one element of $X$ or of certain $l$-element subset $Y\subset[m]$.
    If $A_{X,X}=A_{Y,Y}=0$ and $A_{X,Y}=1$, then $\rk A\ge \rk C+2$.
    
    (a') For $l$-element subsets $P,Q\subset[m-l+1]$ define
    \[
        D_{P,Q}:=A_{P,Q}+A_{P,X}A_{Q,X}.
    \]
    If $A_{X,X} = 1$, then $\rk D<\rk A$ and $D$ is an $[m-l+1]\choose l$-matrix.
\end{problem}

Assertions~\ref{p:step-alg-l}.a,b are only required to illustrate the idea of Assertions~\ref{p:step-alg-l}.a',b' by proving much easier results giving estimates $\rk A\ge\dfrac{m-2l+2}l$ and, for $A$ even, $\rk A\ge\dfrac{2(m-2l+2)}{2l-1}$.

Denote by $r_m$ the minimal rank of an ${[m]\choose l}$-matrix.
Denote by $\t{r_m}$ the minimal rank of an even ${[m]\choose l}$-matrix.
Clearly, $r_m=\t{r_m}=0$ for $m\le2l-2$, both sequences $r_m,\t{r_m}$ are non-decreasing, and $r_m\le\t{r_m}$.
The non-triviality implies that $r_{2l-1},\t{r_{2l-1}}\ge1$.
Theorem~\ref{p:rank} asserts that $r_m\ge\dfrac{m-2l+2}{l-1}$ and $\t{r_m}\ge\dfrac{2(m-2l+2)}l$.

\begin{proposition}[cf. Proposition \ref{p:rankest3a}]\label{p:rankest}
    
    
    (a) $r_m\ge\min\{r_{m-l+1}+1,\t{r_m}\}$ (more precisely, either $r_m=\t{r_m}$, or $r_m\ge r_{m-l+1}+1$);
    
    (b)  $\t{r_m}\ge\t{r_{m-l}}+2$.
\end{proposition}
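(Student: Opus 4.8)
The plan is to obtain both inequalities by the two-case strategy already used to deduce Proposition~\ref{p:rankest3a} from Assertions~\ref{p:hered3} and~\ref{p:step-alg3}, but now feeding in the sharp reduction steps \ref{p:step-alg-l}.a$'$ and~\ref{p:step-alg-l}.b$'$ (rather than the illustrative Assertions~\ref{p:step-alg-l}.a,b, which would only yield the weaker estimates $\rk A\ge\frac{m-2l+2}{l}$ and, for $A$ even, $\frac{2(m-2l+2)}{2l-1}$). The one preparatory remark needed is that the property of being an ${[m]\choose l}$-matrix, the rank, and evenness are all invariant under a simultaneous permutation of the ground set $[m]$; this lets us freely move any distinguished $l$-subset to the position $X=\{m-l+1,\ldots,m\}$ required by Assertion~\ref{p:step-alg-l}.

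For part (a): pick an ${[m]\choose l}$-matrix $A$ with $\rk A=r_m$. If $A$ is even, then $A$ witnesses $\t{r_m}\le\rk A=r_m$, and since $r_m\le\t{r_m}$ always, $r_m=\t{r_m}$ — the first alternative. If $A$ is not even, some diagonal entry $A_{X,X}=1$ with $|X|=l$; after permuting $[m]$ assume $X=\{m-l+1,\ldots,m\}$ and apply Assertion~\ref{p:step-alg-l}.a$'$: the matrix $D_{P,Q}=A_{P,Q}+A_{P,X}A_{Q,X}$ on $l$-subsets of $[m-l+1]$ is an ${[m-l+1]\choose l}$-matrix with $\rk D<\rk A$, so $r_m=\rk A\ge\rk D+1\ge r_{m-l+1}+1$ — the second alternative. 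Either way $r_m\ge\min\{r_{m-l+1}+1,\t{r_m}\}$.

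For part (b): pick an even ${[m]\choose l}$-matrix $A$ with $\rk A=\t{r_m}$. By non-triviality $A\ne 0$, so $A$ has a non-zero row; after permuting $[m]$ assume this is the row indexed by $X:=\{m-l+1,\ldots,m\}$, i.e.\ $A_{X,Y}=1$ for some $l$-subset $Y$. As $A$ is even, $A_{X,X}=A_{Y,Y}=0$, so Assertion~\ref{p:step-alg-l}.b$'$ applies and gives $\rk A\ge\rk B+2$, where $B$ is the restriction of $A$ to $l$-subsets of $[m]-X$. Iterating Assertion~\ref{p:hered-l} (deleting $m$, then $m-1$, \ldots, then $m-l+1$) shows $B$ is an ${[m-l]\choose l}$-matrix, and $B$ is even since its diagonal entries are among those of $A$; hence $\rk B\ge\t{r_{m-l}}$ and $\t{r_m}=\rk A\ge\rk B+2\ge\t{r_{m-l}}+2$.

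Because all the substance is already packaged in Assertions~\ref{p:step-alg-l}.a$'$,b$'$, the only points requiring care in this argument are the two bookkeeping items: (i) justifying the harmless relabelling that puts the distinguished subset at $X=\{m-l+1,\ldots,m\}$, and (ii) checking that the smaller matrices $D$ and $B$ remain of the required type — an ${[m']\choose l}$-matrix, and even when $A$ is — which is Assertion~\ref{p:hered-l} together with the observation that evenness is inherited by any submatrix obtained by deleting rows and columns. Neither is a real obstacle; the genuine difficulty of this circle of ideas lies upstream, in establishing Assertions~\ref{p:step-alg-l}.a$'$,b$'$ themselves (the ``matrix-with-relations'' elimination of a block).
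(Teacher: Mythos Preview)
Your proof is correct and follows essentially the same route as the paper: both parts pick a rank-minimizing matrix $A$, branch on whether $A$ is even, and in the non-even (resp.\ even) case apply Assertion~\ref{p:step-alg-l}.a$'$ (resp.\ Assertion~\ref{p:step-alg-l}.b$'$) to produce an ${[m-l+1]\choose l}$-matrix $D$ (resp.\ an even ${[m-l]\choose l}$-matrix $B$) of strictly smaller rank. Your bookkeeping is in fact a bit more explicit than the paper's --- you spell out the relabelling that moves the distinguished subset to $X=\{m-l+1,\ldots,m\}$ and the inheritance of evenness by $B$ --- but the argument is the same.
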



Proof of Proposition~\ref{p:rankest}.a also uses an algebraic version (b) of the higher-dimensional analogue of the following result (a).

\begin{proposition}\label{p:rankher}
    (a) Denote by $X={{[5]\choose2}\choose2}$ the set of unordered pairs of $2$-element subsets of $[5]$. 
    For any $i\in[5]$ and a partition $[5]-i=\sigma\sqcup\tau$ into disjoint $2$-element sets denote 
    \[
        T_{i,\{\sigma,\tau\}}:=\bigl\{\{\alpha,\beta\}\in X\ :\  \alpha\subset\sigma\sqcup i,\ \beta\subset\tau\sqcup i\bigr\}.
    \]
    Denote by $A_i$ the sum modulo $2$ (i.~e., the symmetric difference) of sets $T_{i,\{\sigma,\tau\}}$ over all non-ordered partitions $[5]-i=\sigma\sqcup\tau$ as above.
    Then
    \[
        A_i=\bigl\{\{\alpha,\beta\}\in X\  :\ \alpha\cap\beta=\emptyset\bigr\}
    \]
    and so is independent of $i$.
    
    (b)
    Let $A$ be a symmetric square matrix with $\Z_2$-entries whose rows and whose columns correspond to all $l$-element subsets of $[m]$.
    If $A$ satisfies the linear dependence property (from the definition of an ${[m]\choose l}$-matrix), then $A_{F,i}$ depends only on $F\sqcup i$ not on $(F,i)$. 
\end{proposition}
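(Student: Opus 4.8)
For part (a) I would argue by a direct parity count. Fix $i\in[5]$ and an unordered pair $\{\alpha,\beta\}$ of $2$-element subsets of $[5]$. Since $[5]=(\sigma\cup i)\sqcup\tau$, one has $\alpha\subseteq\sigma\cup i$ iff $\alpha\cap\tau=\emptyset$, and likewise $\beta\subseteq\tau\cup i$ iff $\beta\cap\sigma=\emptyset$; moreover $\alpha$ and $\beta$ cannot both avoid $\sigma\cup\tau=[5]-i$, so for each partition $\{\sigma,\tau\}$ at most one of the two possible matchings of $\{\alpha,\beta\}$ to $\{\sigma,\tau\}$ works. Hence the number of partitions $\{\sigma,\tau\}$ of $[5]-i$ with $\{\alpha,\beta\}\in T_{i,\{\sigma,\tau\}}$ equals the number of $2$-element sets $\sigma\subseteq[5]-i$ with $\alpha\cap([5]-i)\subseteq\sigma$ and $\sigma\cap\beta=\emptyset$. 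A short case analysis on the position of $\alpha\cap\beta$ (empty, equal to $\{i\}$, or another singleton), and, when $\alpha\cap\beta=\emptyset$, on whether $i$ lies in $\alpha$, in $\beta$, or in neither, shows this number is $2$ when $\alpha\cap\beta=\{i\}$, is $0$ when $\alpha\cap\beta$ is a singleton different from $\{i\}$, and is $1$ in all three subcases with $\alpha\cap\beta=\emptyset$. So its parity is $1$ exactly when $\alpha\cap\beta=\emptyset$, independently of $i$; this yields both the displayed formula for $A_i$ and the independence.

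For part (b) it suffices to fix a $(2l-1)$-element set $G\subseteq[m]$ and two distinct elements $i,j\in G$ and prove $A_{G-i,\,i}=A_{G-j,\,j}$ (any two splittings of $G$ as a set together with one extra element are of this form). Put $H:=G-\{i,j\}$, a $(2l-3)$-element set. In any partition of $G-i$ into two $(l-1)$-element sets exactly one part contains $j$; deleting $j$ from it gives a bijection with the $(l-2)$-element subsets $\sigma\subseteq H$, so $A_{G-i,\,i}=\sum_{\sigma}A_{\{i,j\}\cup\sigma,\ \{i\}\cup(H-\sigma)}$, the sum over all such $\sigma$; analogously $A_{G-j,\,j}=\sum_{\sigma}A_{\{i,j\}\cup\sigma,\ \{j\}\cup(H-\sigma)}$. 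Now apply the linear dependence property with the $l$-set $P:=\{i,j\}\cup\sigma$ and the $(l+1)$-set $\{i,j\}\cup\tau$, where $\tau:=H-\sigma$, isolating the summand for deleting $j$; using symmetry of $A$ this gives, over $\Z_2$,
\[
A_{\{i,j\}\cup\sigma,\ \{i\}\cup\tau}=A_{\{i,j\}\cup\sigma,\ \{j\}\cup\tau}+\sum_{t\in\tau}A_{\{i,j\}\cup(\tau-t),\ \{i,j\}\cup\sigma}.
\]
Summing over $\sigma$ and comparing with the two expansions above yields $A_{G-i,\,i}+A_{G-j,\,j}=S$, where $S:=\sum_{\sigma}\sum_{t\in\tau}A_{\{i,j\}\cup(\tau-t),\ \{i,j\}\cup\sigma}$.

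The crux is to show $S=0$. Each summand is $A_{U,V}$ for an unordered pair of $l$-element sets $U,V\subseteq G$ with $U\cap V=\{i,j\}$; conversely a given such pair is produced by exactly the two index pairs $(\sigma,t)=(U-\{i,j\},t_0)$ and $(\sigma,t)=(V-\{i,j\},t_0)$, where $t_0$ is the unique element of $H$ not in $U\cup V$. For $l\ge3$ these are genuinely two distinct index pairs (then $U\neq V$), so every term of $S$ occurs an even number of times and $S=0$; hence $A_{G-i,\,i}=A_{G-j,\,j}$, which is the asserted well-definedness of $A_{F,i}$. (For $l=2$ one gets instead $S=A_{\{i,j\},\{i,j\}}$, which is why the even hypothesis matters in that degenerate range.) I expect the only delicate point to be this last step --- setting up the two-to-one correspondence between the terms of the double sum $S$ and the pairs $\{U,V\}$; the rest is routine re-indexing together with one application of linear dependence.
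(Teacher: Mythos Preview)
Your proof is correct and follows essentially the same route as the paper. For (a) both arguments are a direct parity count by cases on $|\alpha\cap\beta|$; for (b) both expand $A_{G-i,i}$ and $A_{G-j,j}$ over $(l-2)$-element subsets $\sigma$ of $H=G-\{i,j\}$, apply linear dependence once with $F=\{i,j\}\cup\tau$ and $P=\{i,j\}\cup\sigma$, and then kill the residual double sum by the involution swapping the two ``halves'' --- the paper phrases this as $(\sigma,\nu)\leftrightarrow(\nu,\sigma)$ after the change of variable $\nu=\tau-t$, while you phrase it as a two-to-one map onto unordered pairs $\{U,V\}$ with $U\cap V=\{i,j\}$; these are the same pairing. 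Your explicit remark that the pairing degenerates at $l=2$ (leaving the diagonal term $A_{\{i,j\},\{i,j\}}$) is a useful observation the paper leaves implicit in its standing hypothesis $l\ge 3$.
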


\emph{Hints and sketches of some proofs.}
 
\smallskip
\textbf{\ref{p:step-alg-l}.} (For \ref{p:step-alg3} take $l=3$.)

(a) Let $B'$ be the `restriction' of $A$ to $X$ and to $l$-element subsets of $[m]-X$.
Then
\[
    \rk A\ge\rk B'=\rk B+1,
\]
where equality holds because by the triviality $B'_{X,Z}=0$ for any $Z\subset [m]-X$.

(b) Let $C'$ be the `restriction' of $A$ to $X$, $Y$ and $l$-element subsets of $[m]-X-Y$.
Then
\[
    \rk A\ge\rk C'=\rk C+2,
\]
where equality holds because by the triviality $C'_{X,Z}=C'_{Y,Z}=0$ for any $Z\subset [m]-X-Y$.

(b')
Take a basis of $\Z_2^{{m\choose l}}$ corresponding to $l$-element subsets of $[m]$.
Define a bilinear form $A$ on $\Z_2^{{m\choose l}}$ by setting $A(P,Q):=A_{P,Q}$ for basic vectors $P$, $Q$.
Take any $l$-element set $P\subset[m]$.
Let
\[
    \overline P=\overline P(X,Y):=P+A_{X,P}Y+A_{Y,P}X.
\]
Recall that
\begin{equation}
    A_{X,Y}=A_{Y,X}=1\quad\text{and}\quad A_{X,X}=A_{Y,Y}=0.
\tag{*}
\end{equation}
Hence
\begin{equation}
    A(\overline P,X)=A(\overline P,Y)=0
\tag{**}
\end{equation}
(i.~e., $\overline P$ is the orthogonal projection of $P$ to the orthogonal complement of $\left<X,Y\right>$
with respect to $A$).
By the triviality, for $P\subset[m]-X$ we have $\overline P=P+A_{Y,P}X$.
Hence for every $l$-element sets $P,Q\subset [m]-X$ we have
\begin{equation}
    A(\overline P,\overline Q) = A_{P, Q}+0+0+0 = B_{P, Q}.
\tag{***}
\end{equation}
(I.~e., $B$ is the Gramian matrix with respect to $A$ of the `projections' $\overline P$ of $l$-element sets $P\subset[m]-X$.)
Let $B'$ be the Gramian matrix with respect to $A$ of $X$, $Y$ and the `projections' $\overline R$ of $l$-element sets $R\subset[m]-X$.
I.~e., $B'_{P,Q}=A(\widehat P,\widehat Q)$, where $\widehat P=P$ if $P\in\{X,Y\}$, and $\widehat P=\overline P$ otherwise ($\widehat Q$ is defined analogously).
Then

$\bullet$ $B'_{X,Y} = B'_{Y,X} = 1$, $B'_{X, X} = B'_{Y, Y} = 0$ (by (*)),

$\bullet$ $B'_{X,P} = B'_{P,X} = B'_{Y,P} = B'_{P,Y} = 0$ for $P\ne X,Y$ (by (**)), and

$\bullet$ $B'_{P,Q} = B_{P,Q}$ for $P,Q\subset[m]-X$ (by (***)).

Hence $\rk B+2 = \rk B' \le \rk A$.

(a') In this paragraph we prove that $\rk D<\rk A$.
Take a basis of $\Z_2^{{m\choose l}}$ corresponding to $l$-element subsets of $[m]$.
Define a bilinear form $A$ on $\Z_2^{{m\choose l}}$ by setting $A(P,Q):=A_{P,Q}$ for basic vectors $P,Q$.
Let $P_X$ be the orthogonal projection of $P$ to the orthogonal complement of $X$ (with respect to $A$), i.~e., $P_X:=P+A_{P,X}X$.
We have
\begin{multline*}
    A(P_X,Q_X) = A(P,Q)+A(A_{P,X}X,Q)+A(P,A_{Q,X}X)+A(A_{P,X}X,A_{Q,X}X) = \\ =
    A_{P,Q}+A_{P,X}A_{X,Q}+A_{P,X}A_{Q,X}+A_{P,X}A_{Q,X}A_{X,X} = A_{P,Q} + A_{P,X}A_{Q,X} = D_{P,Q}.
\end{multline*}
Then $D$ is the Gramian matrix (with respect to $A$) of the projections of subsets of $[m - l + 1]$.
Let $D'$ be the Gramian matrix (with respect to $A$) of $X$ and the projections of subsets of $[m - l + 1]$.
We have $D_{P, Q} = D'_{P, Q}$ for all subsets $P, Q \subset [m - l + 1]$.
Furthermore, $D'_{X,P} = D'_{P, X} = 0$ for any basic vector $P \ne X$ and $D'_{X, X} = A_{X, X} = 1$.
Thus $\rk D = \rk D' - 1 <\rk A$.

In this paragraph we prove that $D$ satisfies the triviality property.
If $P \cap Q = \emptyset$, then either $P \cap X = \emptyset$, or $Q \cap X = \emptyset$.
Hence $D_{P, Q} = A_{P, Q} + A_{P, X}A_{Q, X} = 0 + 0 = 0$.

In this paragraph we prove that $D$ satisfies the linear dependence property.
For each $(l+1)$-element and $l$-element subsets $F,P\subset [m-l+1]$ we have
\[
    \sum\limits_{i \in F} D_{F - i, P} =
    \sum\limits_{i \in F} A_{F - i, P} + A_{P, X} \sum\limits_{i \in F} A_{F - i, X} = 0.
\]

In this paragraph we prove that $D$ satisfies the non-triviality property.
By Proposition \ref{p:rankher}.b for $D$, we may assume that $i \neq m - l + 1$.
Then for each summand $D_{i\sqcup \sigma,i\sqcup\tau}$ of $D_{F, i}$
at least one of the sets $i\sqcup \sigma,i\sqcup\tau$ does not contain $m - l + 1$ and hence does not intersect $X$.
Hence
$D_{i\sqcup \sigma,i\sqcup\tau} = A_{i\sqcup \sigma,i\sqcup\tau} + A_{i\sqcup \sigma, X}A_{i\sqcup\tau, X} = A_{i\sqcup \sigma,i\sqcup\tau}$.
Thus $D_{F,i}=A_{F,i}=1$.

\smallskip
\textbf{\ref{p:rankest}.}
(a) Take an ${[m]\choose l}$-matrix $A$ such that $\rk A = r_m$.
If $A$ is even, then $r_m = \t{r_m}$, so we are done.
Otherwise there is an $l$-element subset $X\subset [m]$ such that $A_{X,X}=1$.
Without loss of generality $X=\{m-l+1,m-l+2,\ldots,m\}$.
Then by Assertion~\ref{p:step-alg-l}.a'
\[
    r_m = \rk A \ge \rk D + 1 \ge r_{m - l + 1} + 1, \quad\text{where}
\]

$\bullet$ $D$ is the matrix defined in Assertion~\ref{p:step-alg-l}.a';

$\bullet$ the first inequality follows from Assertion~\ref{p:step-alg-l}.a';

$\bullet$ the second inequality holds because $D$ is an $[m-l+1]\choose l$-matrix by Assertion~\ref{p:step-alg-l}.a'.

(b)
Take an even $[m]\choose l$-matrix $A$ such that $\rk A=\t{r_m}$.
By the non-triviality $A\ne0$.
Let $X, Y, B$ be defined as in Assertion~\ref{p:step-alg-l}.b'.
Then
\[
    \t{r_m}=\rk A\ge\rk B+2 \ge r_{m-l}+2,\quad\text{where}
\]

$\bullet$ the first inequality follows from Assertion~\ref{p:step-alg-l}.b';

$\bullet$ the second inequality holds because $B$ is an even $[m]-X\choose l$-matrix by Assertion~\ref{p:hered-l}.

\smallskip
\textbf{\ref{p:rankher}.}
(a) It suffices to check that for each pair $\{\alpha, \beta\}$  the number of sets $T_{i,\{\sigma, \tau\}}$ containing $\{\alpha, \beta\}$ is odd if and only if $\alpha\cap\beta=\emptyset$ (hence this parity not depend on $i$). 
Clearly, $|\alpha \cap \beta|\le 2$.

Assume $|\alpha \cap \beta|=2$. 
Then $\{\alpha, \beta \}\notin T_{i,\{\sigma, \tau\}}$ for all $i, \sigma, \tau$ and hence $\{\alpha, \beta\}\notin A_i$ for all $i\in[5]$. 

Assume $|\alpha \cap \beta|=1$.
It suffices to consider the case $\alpha=\{1, 2\}$, $\beta=\{1, 3\}$. 
Then $\{\alpha, \beta \}\in T_{i,\{\sigma, \tau\}}$ iff $i=1$ and $\{\sigma, \tau\}$ is either $\bigl\{\{2, 4\}, \{3, 5\}\bigr\}$~or~$\bigl\{\{2, 5\}, \{3, 4\}\bigr\}$
Therefore $\{\alpha, \beta\}\notin A_i$ for all $i\in[5]$. 

Assume $|\alpha \cap \beta|=0$.
It suffices to consider the case $\alpha=\{1, 2\}$, $\beta=\{3, 4\}$.
Then $\{\alpha, \beta \}\in T_{i,\{\sigma, \tau\}}$ iff either

$\bullet$ $i=1$ and $\{\sigma, \tau\}=\bigl\{\{1, 2, 5\}, \{1, 3, 4\}\bigr\}$, or 

$\bullet$ $i=2$ and $\{\sigma, \tau\}=\bigl\{\{1, 2, 5\}, \{2, 3, 4\}\bigr\}$, or

$\bullet$ $i=3$ and $\{\sigma, \tau\}=\bigl\{\{1, 2, 3\}, \{3, 4, 5\}\bigr\}$, or 

$\bullet$ $i=4$ and $\{\sigma, \tau\}=\bigl\{\{1, 2, 4\}, \{3, 4, 5\}\bigr\}$, or 

$\bullet$ $i=5$ and $\{\sigma, \tau\}=\bigl\{\{1, 2, 5\}, \{3, 4, 5\}\bigr\}$. 

Therefore $\{\alpha, \beta\}\in A_i$ for every $i\in[5]$.

(b)
It suffices to prove that $A_{G\sqcup i,j}=A_{G\sqcup j,i}$ for each $i,j\in[m]$ and $(2l-3)$-element subset $G\subset [m]-i-j$.
Denote $\overline\sigma := \{i, j\}\sqcup\sigma$.
Then
\begin{multline*}
    A_{G\sqcup j,i}+A_{G\sqcup i,j} \overset{(1)}=
    \sum\limits_{\{(\sigma,\tau)\ :\ G=\sigma\sqcup \tau,\ |\sigma|=l-2\}}
    \left(A_{\overline\sigma, i\sqcup\tau} + A_{\overline\sigma, j\sqcup\tau}\right) \overset{(2)} = \\ =
    \sum\limits_{\{(\sigma,\tau)\ :\ G=\sigma\sqcup \tau,\ |\sigma|=l-2\}}\ \sum\limits_{t \in \tau} A_{\overline\sigma, \overline{\tau - t}}\overset{(3)}=
    \sum\limits_{t\in G}\ \sum\limits_{\{(\sigma,\nu)\ :\ G-t=\sigma\sqcup\nu,\ |\sigma|=l-2\}}
    A_{\overline\sigma, \overline\nu}
    \overset{(4)}=0,\quad\text{where}
\end{multline*}

$\bullet$ equality (1) holds because $A_{G\sqcup j,i}$ is equal to the sum of the first summands $A_{\overline\sigma,i\sqcup\tau}$,
and $A_{G\sqcup i,j}$ is equal to the sum of the second summands $A_{\overline\sigma,j\sqcup\tau}$;

$\bullet$ equality (2) holds by the linear dependence for $F=\overline\tau$, $P=\overline\sigma$;

$\bullet$ equality (3) is obtained by changes of the order of summation and of variable $\nu=\tau-t$. 

$\bullet$ equality (4) holds because ordered decompositions $(\sigma,\nu)$ of $G-t$ into $(l-2)$-element subsets $\sigma,\nu$ split into pairs $\{(\sigma,\nu),(\nu,\sigma)\}$, and 
$A_{\overline\sigma,\overline\nu}+A_{\overline\nu,\overline\sigma}=0$.

\section{Classification of symmetric bilinear forms}\label{s:bil}

This section 
illustrates the method of \S\ref{s:comhyp} and \S\ref{s:comhypge} by  simple basic examples.  

Fix a symmetric matrix $A\in\Z_2^{n\times n}$.
For $U,V\in\Z_2^n$ let
$$A(U,V)=U\cdot_A V:=\sum_{i,j=1}^n A_{i,j}U_iV_j\quad (=U^TAV).$$
A {\bf basis} of $\Z_2^n$ is an inclusion-minimal ordered set of vectors such that every vector from $\Z_2^n$ is the sum of some vectors from this set. 

\begin{theorem}\label{t:2dim} 
For $n=2$ there is a basis $X_1,X_2$ of $\Z_2^2$ and numbers $\gamma_1,\gamma_2\in\Z_2$ such that either

(i) for any $a_1,a_2,b_1,b_2\in\Z_2$ we have
$$(a_1X_1+a_2X_2)\cdot_A(b_1X_1+b_2X_2)=\gamma_1a_1b_1+\gamma_2a_2b_2,\quad\text{or}$$

(ii) for any $a_1,a_2,b_1,b_2\in\Z_2$ we have
$$(a_1X_1+a_2X_2)\cdot_A(b_1X_1+b_2X_2) = a_1b_2+a_2b_1.$$
\end{theorem}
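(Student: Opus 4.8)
The plan is to split into two cases according to whether the ``diagonal'' function $U\mapsto A(U,U)$ vanishes identically on $\Z_2^2$. First I would record two elementary observations: over $\Z_2$ one has $A(U,U)=A_{1,1}U_1+A_{2,2}U_2$ (since $U_i^2=U_i$ and $2=0$), so $U\mapsto A(U,U)$ is a \emph{linear} function $\Z_2^2\to\Z_2$; and a pair of vectors in $\Z_2^2$ is a basis in the sense used here precisely when both are non-zero and they are distinct.

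\emph{Non-alternating case.} Suppose there is $X_1\in\Z_2^2$ with $A(X_1,X_1)=1$. Then the linear functional $V\mapsto A(X_1,V)$ on $\Z_2^2$ is non-zero, since it sends $X_1$ to $1$; hence its kernel has exactly two elements, $0$ and some $X_2\ne0$. Because $A(X_1,X_2)=0\ne1=A(X_1,X_1)$, we have $X_2\ne X_1$, so $\{X_1,X_2\}$ is a basis. Put $\gamma_1:=A(X_1,X_1)=1$ and $\gamma_2:=A(X_2,X_2)$. Expanding the left side of (i) by bilinearity and symmetry and using $A(X_1,X_2)=A(X_2,X_1)=0$ yields $\gamma_1a_1b_1+\gamma_2a_2b_2$, so (i) holds. (One may instead perform an explicit orthogonalization step: pick any $Y$ with $\{X_1,Y\}$ a basis and set $X_2:=Y+A(X_1,Y)X_1$, for which $A(X_1,X_2)=0$ and $\{X_1,X_2\}$ is still a basis.)

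\emph{Alternating case.} Suppose $A(U,U)=0$ for every $U\in\Z_2^2$. Taking $U=(1,0)$ and $U=(0,1)$ gives $A_{1,1}=A_{2,2}=0$, so $A$ is determined by $c:=A_{1,2}=A_{2,1}\in\Z_2$. If $c=0$ then $A=0$, and the standard basis $X_1=(1,0)$, $X_2=(0,1)$ with $\gamma_1=\gamma_2=0$ realizes (i). If $c=1$ then with the same standard basis one computes directly that the left side of (ii) equals $a_1b_2+a_2b_1$, so (ii) holds.

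I do not expect a substantial obstacle here: the case split is exactly the alternating/non-alternating dichotomy for symmetric bilinear forms over $\Z_2$, and in the non-alternating case the construction is a single orthogonalization step. The only points that need a little care are verifying that the constructed vectors genuinely form a basis in the inclusion-minimal sense of the definition (handled by the ``non-zero and distinct'' criterion in $\Z_2^2$) and not overlooking the degenerate sub-case $A=0$ inside the alternating case.
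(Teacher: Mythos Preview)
Your proof is correct and follows essentially the same approach the paper outlines via Assertions~\ref{pm:odd:2dim} and~\ref{pm:even:2dim}: in the non-alternating case you orthogonalize against a vector $X_1$ with $A(X_1,X_1)=1$ (your parenthetical construction $X_2:=Y+A(X_1,Y)X_1$ is precisely the paper's $P_X:=P+\lambda_{X,P}X$), and in the alternating case you verify the hyperbolic form directly on the standard basis. The only cosmetic difference is that your primary non-alternating argument phrases the search for $X_2$ as ``find the non-zero kernel vector of $V\mapsto A(X_1,V)$'' rather than ``project an arbitrary complement'', but these are the same step.
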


Recall that assertions and problems stated after theorems are hints to proofs of the theorems. 

\begin{problem}\label{pm:odd:2dim} Assume that $n=2$, $X\in\Z_2^2$ and $X\cdot_A X=1$.

(a) For any $P\in\Z_2^2$ there is $\lambda_{X,P}\in\Z_2$ such that
for $P_X:=P+\lambda_{X,P}X$ we have $P_X\cdot_A X=0$.


(b) There is a basis $X_1=X$, $X_2$ of $\Z_2^2$ and numbers
$\gamma_1=1,\gamma_2\in\Z_2$ such that the property (i) of Theorem \ref{t:2dim} holds. 
\end{problem}

\begin{problem}\label{pm:even:2dim}
Assume that $n=2$, $X,Y\in\Z_2^2$ and $X\cdot_A Y=1$, $X\cdot_A X=Y\cdot_A Y=0$.
Then $X_1:=X$, $Y_1:=Y$ is a basis of $\Z_2^2$ such that the property (ii) of Theorem \ref{t:2dim} holds. 
\end{problem}

\begin{theorem}\label{t:3dim} For $n=3$ there is a basis $X_1,X_2,X_3$ of $\Z_2^3$ and numbers $\gamma_1,\gamma_2,\gamma_3\in\Z_2$ such that either

(i) for any $a_1,a_2,a_3,b_1,b_2,b_3\in\Z_2$ we have
$$(a_1X_1+a_2X_2+a_3X_3)\cdot_A(b_1X_1+b_2X_2+b_3X_3) = \gamma_1a_1b_1+\gamma_2a_2b_2+\gamma_3a_3b_3,\quad\text{or}$$

(ii) for any $a_1,a_2,a_3,b_1,b_2,b_3\in\Z_2$ we have
$$(a_1X_1+a_2X_2+a_3X_3)\cdot_A(b_1X_1+b_2X_2+b_3X_3) = a_1b_2+a_2b_1+\gamma_3a_3b_3.$$
\end{theorem}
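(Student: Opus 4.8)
\textbf{Proof plan for Theorem \ref{t:3dim}.}

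The plan is to mimic the inductive structure already visible in Theorems \ref{t:2dim}, \ref{pm:odd:2dim}, \ref{pm:even:2dim}, reducing the $3$-dimensional classification to the $2$-dimensional one via an orthogonal-projection argument with respect to the bilinear form $A$. First I would split into two cases according to whether $A$ is \emph{even} (i.e. $V\cdot_A V=0$ for all $V\in\Z_2^3$) or not.

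\emph{Case 1: $A$ is not even.} Then there is $X_1\in\Z_2^3$ with $X_1\cdot_A X_1=1$. As in Assertion \ref{pm:odd:2dim}.a, for every $P\in\Z_2^3$ the vector $P_{X_1}:=P+(P\cdot_A X_1)X_1$ satisfies $P_{X_1}\cdot_A X_1=0$; thus the orthogonal complement $W:=\{V:V\cdot_A X_1=0\}$ is a $2$-dimensional subspace with $\Z_2^3=\langle X_1\rangle\oplus W$, and the projection $P\mapsto P_{X_1}$ preserves $A$-products between vectors of $W$ (the cross terms vanish because $A$ is symmetric and $X_1\cdot_A X_1=1$). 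Applying Theorem \ref{t:2dim} to the restriction of $A$ to $W$ yields a basis of $W$ of type (i) or (ii). In the (i) subcase $W$ has an orthogonal basis $X_2,X_3$, and together with $X_1$ (orthogonal to all of $W$) we get the orthogonal basis required in Theorem \ref{t:3dim}(i) with $\gamma_1=1$. In the (ii) subcase $W$ has a basis $X_2,X_3$ with $X_2\cdot_A X_2=X_3\cdot_A X_3=0$ and $X_2\cdot_A X_3=1$; reordering so that the `hyperbolic pair' comes first and $X_1$ (the vector with square $1$) comes last gives exactly the normal form of Theorem \ref{t:3dim}(ii) with $\gamma_3=1$.

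\emph{Case 2: $A$ is even.} If $A=0$ any basis works in form (i) with all $\gamma_i=0$. Otherwise there are $X,Y$ with $X\cdot_A Y=1$, and by evenness $X\cdot_A X=Y\cdot_A Y=0$; as in Assertion \ref{pm:even:2dim}, $X,Y$ span a nondegenerate hyperbolic plane $H$. For any $P$ put $P':=P+(P\cdot_A Y)X+(P\cdot_A X)Y$, so that $P'\cdot_A X=P'\cdot_A Y=0$; this gives $\Z_2^3=H\oplus H^{\perp}$ with $\dim H^{\perp}=1$, say $H^{\perp}=\langle Z\rangle$, and the projection preserves $A$-products within $H^{\perp}$. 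Since $A$ is even, $Z\cdot_A Z=0$, so $A$ restricted to $H^{\perp}$ is the zero form. Taking $X_1:=X$, $X_2:=Y$, $X_3:=Z$ gives form (ii) with $\gamma_3=0$. (Alternatively, one can note $A|_{H^\perp}$ could a priori be $\gamma_3(\cdot)^2$ with $\gamma_3\in\{0,1\}$, but evenness forces $\gamma_3=0$, which is why the even case always lands in form (ii) with $\gamma_3=0$; the statement as written allows $\gamma_3=1$ in form (ii), which is what is actually produced in the odd Case 1 above.)

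The only genuine obstacle is the bookkeeping in the non-even case: one must check that the projection $P\mapsto P_{X_1}$ is $A$-orthogonal onto $W$ and genuinely splits the space, and then carefully reconcile the orderings so that the output of Theorem \ref{t:2dim} applied to $W$, once $X_1$ is reinserted, matches one of the two listed normal forms for $n=3$ — in particular noticing that an odd $A$ whose complement is hyperbolic produces form (ii) with $\gamma_3=1$, while an odd $A$ whose complement is diagonal produces form (i). All the computations are the routine $2\times2$ and $3\times3$ bilinear-algebra manipulations already rehearsed in Assertions \ref{pm:odd:2dim} and \ref{pm:even:2dim}, so no new idea beyond iterated orthogonal projection is needed.
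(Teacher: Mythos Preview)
Your plan is correct and follows essentially the same approach as the paper: reduce to the $2$-dimensional case by splitting off either a vector $X$ with $X\cdot_A X=1$ (via the projection $P\mapsto P+(P\cdot_A X)X$, cf.\ Assertion~\ref{pm:odd:2dim}) or a hyperbolic pair $X,Y$ (via $P\mapsto P+(P\cdot_A X)Y+(P\cdot_A Y)X$, which is exactly Assertion~\ref{pm:even:3dim}). The only cosmetic difference is the order of the case split: the paper's hint (Assertion~\ref{pm:even:3dim}) first isolates a hyperbolic pair whenever one exists and lands directly in form~(ii), leaving the remaining case (no hyperbolic pair, hence either $A=0$ or $A$ not even with diagonalizable complement) for form~(i); you instead branch first on whether $A$ is even, which forces you to reorder the basis in Case~1 when the complement $W$ turns out hyperbolic. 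Both orderings work and the underlying projection arguments are identical.
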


\begin{problem}\label{pm:even:3dim}
Assume that $X,Y\in\Z_2^3$ and $X\cdot_A Y=1$, $X\cdot_A X=Y\cdot_A Y=0$.
 
(a) For any $P\in\Z_2^3$ there are $\lambda_{X,Y,P},\lambda_{Y,X,P}\in\Z_2$ such that for $P_{X,Y}:=P+\lambda_{X,Y,P}Y+\lambda_{Y,X,P}X$ we have $P_{X,Y}\cdot_A X=P_{X,Y}\cdot_A Y=0$.


(b) There is a basis $X_1=X$, $X_2=Y$, $X_3$ of $\Z_2^3$ and a number $\gamma_3\in\Z_2$ such that the property (ii) of Theorem \ref{t:3dim} holds. 
\end{problem}

\begin{theorem}\label{t:classif} There are $k,l$ and a basis $X_1,Y_1,\ldots,X_k,Y_k,Z_1,\ldots,Z_{n-2k}$ of $\Z_2^n$ such that $2k+l\le n$ and for any
$a,a',b,b'\in\Z_2^k$ and $c,c'\in\Z_2^{n-2k}$ we have
$$(a_1X_1+b_1Y_1+\ldots+a_kX_k+b_kY_k+c_1Z_1+\ldots+c_{n-2k}Z_{n-2k}) \cdot_A $$ $$\cdot_A(a_1'X_1+b_1'Y_1+\ldots+a_k'X_k+b_k'Y_k+c_1'Z_1+\ldots+c_{n-2k}'Z_{n-2k}) =$$ $$=a_1b_1'+a_1'b_1+\ldots+a_kb_k'+a_k'b_k+c_1c_1'+\ldots+c_lc_l'.$$
If $A$ is even, then $l=0$.
\end{theorem}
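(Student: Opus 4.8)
The plan is to prove Theorem \ref{t:classif} by induction on $n$, following the pattern already established by Theorems \ref{t:2dim} and \ref{t:3dim} and Assertions \ref{pm:odd:2dim}, \ref{pm:even:2dim}, \ref{pm:even:3dim}. The base case $n=0$ (or $n=1$) is trivial: for $n=1$ either $A$ is the zero form, handled by $k=0$, $l=0$, or $A(X_1,X_1)=1$, handled by $k=0$, $l=1$, $Z_1=X_1$. For the inductive step, the dichotomy is whether the form $A$ is identically zero on $\Z_2^n$: if so, take all $Z_i$ to be any basis and $k=l=0$; otherwise pick vectors on which $A$ does not vanish and split off a one- or two-dimensional summand.

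The key split is exactly the ``odd'' versus ``even'' case. First I would check: does there exist $X\in\Z_2^n$ with $A(X,X)=1$? If yes (the ``odd'' case), I extend $X$ to $X=X_1$ and, using the orthogonal-projection trick of Assertion \ref{pm:odd:2dim}.a (which works verbatim in $\Z_2^n$: replace any $P$ by $P_X:=P+A(P,X)X$, so that $A(P_X,X)=A(P,X)+A(P,X)A(X,X)=0$), I obtain a complement $W=X^{\perp_A}$ of dimension $n-1$ on which I restrict $A$ and apply the induction hypothesis. This produces a $Z_j=X$ contributing the $c_lc_l'$ term with $l$ incremented by one. If no such $X$ exists but $A$ is not identically zero, then there are $X,Y$ with $A(X,Y)=1$ and automatically $A(X,X)=A(Y,Y)=0$; this is the ``even'' case. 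Here I use the two-step projection of Assertion \ref{pm:even:3dim}.a in its $\Z_2^n$ form: $P\mapsto P+A(P,X)Y+A(P,Y)X$ lands in $\langle X,Y\rangle^{\perp_A}$, an $(n-2)$-dimensional space, on which I apply induction; this splits off a hyperbolic pair $X_i,Y_i$ contributing $a_ib_i'+a_i'b_i$, with $k$ incremented by one. The final clause ``if $A$ is even then $l=0$'' follows because the odd case never arises when $A(P,P)=0$ for all $P$, so no $Z_j$ is ever produced, and one checks the even case preserves evenness of the restricted form.

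The main technical points to verify carefully are: (1) that the projection maps are well-defined and that the projected space really is a complement of $\langle X\rangle$ (resp. $\langle X,Y\rangle$) — i.e. that $X\notin X^{\perp_A}$ since $A(X,X)=1$, and that $\langle X,Y\rangle\cap\langle X,Y\rangle^{\perp_A}=0$ since the form restricted to $\langle X,Y\rangle$ is the nondegenerate hyperbolic form; (2) that assembling the inductively-obtained basis of the complement together with $X$ (resp. $X,Y$) yields a genuine basis of $\Z_2^n$ and that all cross-terms $A(X_1, Z_j)$, $A(X_i,X_j)$ etc. vanish, so the quadratic form decomposes as the stated orthogonal direct sum; and (3) the inequality $2k+l\le n$, which is immediate by tracking dimensions through the induction ($n\mapsto n-1$ with $l\mapsto l+1$, or $n\mapsto n-2$ with $k\mapsto k+1$). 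I expect step (2) — bookkeeping that the three blocks are mutually $A$-orthogonal and that the union of bases is a basis — to be the most error-prone part, though it is conceptually routine; the genuinely substantive content is entirely contained in the projection lemmas, which are the higher-dimensional analogues of Assertions \ref{pm:odd:2dim} and \ref{pm:even:3dim} that the paper has already flagged as the engine of this method.
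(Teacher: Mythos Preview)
Your proposal is correct and matches the paper's intended approach essentially verbatim: the paper does not give an explicit proof of Theorem~\ref{t:classif} but instead supplies Problems~\ref{pm:odd} and~\ref{pm:even} as hints, which are precisely the $n$-dimensional projection lemmas (with $\lambda_{X,P}=X\cdot_A P$ and $\lambda_{X,Y,P}=X\cdot_A P$, $\lambda_{Y,X,P}=Y\cdot_A P$) you invoke for the odd and even inductive steps. Your bookkeeping remarks (1)--(3) are exactly the verifications those problems ask for, and your observation that evenness is preserved under the hyperbolic splitting is the content of the final clause.
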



\begin{pr}\label{pm:odd} Assume that $X\in\Z_2^n$ and $X\cdot_A X=1$.

(a) State and prove the $n$-dimensional analogue of Assertion \ref{pm:odd:2dim}.a.

\emph{Hint: $\lambda_{X,P}=X\cdot_A P$.}


(b) There is a basis $X,E_1,\ldots,E_{n-1}$ of $\Z_2^n$ and a symmetric matrix
$B\in\Z_2^{(n-1)\times(n-1)}$ such that for any $a,b\in\Z_2$ and $\lambda,\mu\in \Z_2^{n-1}$ we have $$(aX+\lambda_1E_1+\ldots+\lambda_{n-1}E_{n-1})\cdot_A(bX+\mu_1E_1+\ldots+\mu_{n-1}E_{n-1})=ab+\lambda\cdot_B\mu.$$
\end{pr}

\begin{pr}\label{pm:even} Assume that $X,Y\in\Z_2^n$ and $X\cdot_A Y=1$, $X\cdot_A X=Y\cdot_A Y=0$.

(a) State and prove the $n$-dimensional analogue of Assertion \ref{pm:even:3dim}.a.

\emph{Hint: $\lambda_{X,Y,P}=X\cdot_A P$, $\lambda_{Y,X,P}=Y\cdot_A P$.}

(b) There is a basis $X,Y,E_1,\ldots,E_{n-2}$ of $\Z_2^n$ and a symmetric matrix
$B\in\Z_2^{(n-2)\times(n-2)}$ such that for any $a_X,a_Y,b_X,b_Y\in\Z_2$ and $\lambda,\mu\in \Z_2^{n-2}$ we have $$(a_XX+a_YY+\lambda_1E_1+\ldots+\lambda_{n-2}E_{n-2})\cdot_A(b_XX+b_YY+\mu_1E_1+\ldots+\mu_{n-2}E_{n-2}) = a_Xb_Y+a_Yb_X+\lambda\cdot_B\mu.$$
\end{pr}


{\it Books, surveys, and expository papers in this list are marked by the stars.}

\end{document}